\newcommand{\Om} {\Omega}
\newcommand {\ii} {\infty}
\newcommand {\al} {\alpha}
\newcommand {\lb} {\lambda}
\newcommand {\sm} {\setminus}
\newcommand {\su} {\subset}
\newcommand {\mc} {\mathcal}
\newcommand {\cal} {\mathcal}
\newcommand {\supp} {\mathop{\rm supp}}
\newtheorem{teo}{Theorem}[section]
\newtheorem{pro}{Proposition}[section]
\newtheorem{cor}{Corollary}[section]
\theoremstyle{definition}
\newtheorem{df}{Definition}[section]
\title{validity space \\ of Dunford-Schwartz pointwise ergodic theorem}
\keywords{Pointwise ergodic theorem, Dunford-Schwartz operator, infinite measure}
\subjclass[2010]{47A35(primary), 37A30(secondary)}
\begin{document}
\date{May 7, 2017}

\begin{abstract}
We show that if a $\sigma-$finite infinite measure space $(\Om,\mu)$ is quasi-non-atomic,
then the Dunford-Schwartz pointwise ergodic theorem
holds for $f\in \cal L^1(\Om)+\cal L^\ii(\Om)$  if and {\it only if}
$\mu\{f\ge \lb\}<\ii$ for all $\lb>0$.
\end{abstract}

\author{VLADIMIR CHILIN}
\address{National University of Uzbekistan, Tashkent, Uzbekistan}
\email{vladimirchil@gmail.com; chilin@ucd.uz}
\author{SEMYON LITVINOV}
\address{Pennsylvania State University \\ 76 University Drive \\ Hazleton, PA 18202, USA}
\email{snl2@psu.edu}

\maketitle

\section{Introduction}
Let $(\Om, \mathcal A, \mu)$ be a complete measure space.
Denote  by $\cal L^0 = \cal L^0(\Om)$  the algebra of equivalence classes of almost everywhere (a.e.)
finite real-valued measurable functions on $\Om$. Let $\mc L^p\su \mc L^0$, $1\leq p\leq \ii$, be the $L^p-$space
equipped with the standard norm $\| \cdot \|_p$.

Let $T:\cal L^1+\cal L^\ii\to \cal L^1+\cal L^\ii$ be a Dunford-Schwartz operator (see Definition \ref{d1}).
Dunford-Schwartz theorem on a.e. convergence of the ergodic  averages
\begin{equation}\label{e1}
A_n(f)= A_n(T,f)=\frac 1 n \sum_{k=0}^{n-1}T^k(f)
\end{equation}
for a Dunford-Schwartz operator $T$ acting in the $L^p-$space, $1\leq p<\ii$, of real valued functions of an
arbitrary measure space was established in \cite{DS}; see also \cite[Theorem VIII.6.6]{ds}.

If $\mu(\Om)=\ii$, then there is the problem of describing the largest subspace of $\cal L^1+\cal L^\ii$
for which Dunford-Schwartz theorem is valid.
It can be noticed that the averages $A_n(f)$ converge a.e. for all $f\in \cal R_\mu$, that is,
when $f\in \cal L^1+\cal L^\ii$ is such that $\mu\{f\ge \lb\}<\ii$ for all $\lb>0$ (see, for example, \cite{ccl}).
We show that  if a $\sigma-$finite infinite measure space $(\Om,\mu)$ has finitely many atoms or its atoms
have equal measures, then
$\cal R_\mu$ is the largest subspace of $\cal L^1+\cal L^\ii$ for which
the convergence takes place: if $f\in (\cal L^1+\cal L^\ii)\sm \cal R_\mu$, then there exists a Dunford-Schwartz operator
$T$ such that the sequence $\{A_n(T,f)\}$ does not converge a.e. (Theorem \ref{t10}).

As a corollary, we derive that a fully symmetric space
$E\su \cal L^1+\cal L^\ii$  of a $\sigma-$finite quasi-non-atomic measure space (see Definition \ref{d4}) possesses
the individual ergodic theorem property
(see Definition \ref{d3}) if and only if the characteristic function of $\Om$ does not belong to $E$. In conclusion, we
outline some (classes of) fully symmetric spaces that do or do not possess the individual ergodic theorem property.

\section{Preliminaries}
 If $f \in \cal L^1 + \cal L^{\ii}$, then
a {\it non-increasing rearrangement} of $f$ is defined as
$$
\mu_t(f)=\inf \{\lb>0: \ \mu\{|f| > \lb\} \leq t\}, \ \  t>0
$$
(see \cite[Ch.II, \S 2]{kps}).

A Banach space $(E, \| \cdot \|_E)\su \cal L^1 + \cal L^{\ii}$ is called {\it symmetric (fully symmetric)} if
$$
f \in E, \ g \in \cal L^1 + \cal L^{\ii}, \ \mu_t(g)\leq \mu_t(f) \ \ \forall \ t>0
$$
(respectively,
$$
f \in E, \ g \in \cal L^1 + \cal L^{\ii}, \ \int \limits_0^s\mu_t(g)dt\leq  \int \limits_0^s\mu_t(f)dt \ \ \forall \ s>0
 \ (\text {writing } \  g \prec\prec f))
$$
implies that $g \in E$ and $\| g\|_E\leq \| f\|_E$.

Immediate examples of fully symmetric spaces are $\cal L^1\cap \cal L^{\infty}$  with the norm
$$
\|f\|_{\cal L^1\cap \cal L^{\infty}}=\max \left \{ \|f\|_1, \|f\|_{\infty} \right\}
$$
and $\cal L^1 + \cal L^{\infty}$ with the norm
$$
\|f\|_{\cal L^1 + \cal L^{\infty}}=\inf \left \{ \|g\|_1+ \|h\|_{\infty}: \ f = g + h, \ g \in \cal L^1, \ h \in \cal L^{\infty} \right \}=
\int_0^1 \mu_t(f) dt
$$
(see \cite[Ch. II, \S 4]{kps}).

Define
$$
\cal R_\mu= \{f \in \cal L^1 + \cal L^{\infty}: \ \mu_t(f) \to 0 \text{ \ as \ } t\to \ii\}.
$$
It is clear that $\mc R_\mu$ admits a more direct description:
$$
\cal R_\mu= \{f \in \cal L^1 + \cal L^{\infty}:  \mu\{f> \lb\}<\ii \text{ \ for all \ } \lb>0\}.
$$
Note that if $\mu(\Om) < \infty$, then $\cal R_\mu$ is simply $\cal L^1$.
Therefore, we will be concerned with infinite measure spaces.

By \cite[Ch.II, \S 4, Lemma 4.4]{kps}, $(\cal R_\mu, \|\cdot\|_{\cal L^1 + \cal L^\ii})$ is a symmetric space. In addition,
$\cal R_\mu$ is the closure of $\cal L^1\cap \cal L^{\infty}$  in $\cal L^1 + \cal L^{\infty}$ (see \cite[Ch.II, \S 3, Section 1]{kps}).
Furthermore, it follows from definitions of $\cal R_\mu$ and $\| \cdot \|_{\cal L^1 + \cal L^{\infty}}$ that if
$$
f \in \cal R_\mu, \ g \in \cal L^1+\cal L^{\infty}, \text{\ and \ } g \prec\prec f,
$$
then $g\in \cal R_\mu$ and $\| g\|_{\cal L^1 + \cal L^{\infty}} \leq \| f\|_{\cal L^1 + \cal L^{\infty}}$. Therefore
$(\cal R_\mu, \|\cdot\|_{\cal L^1 + \cal L^\ii})$ is a fully symmetric space.

\begin{df}\label{d1}
A linear operator $T: \cal L^1 + \cal L^{\ii} \to  \cal L^1 + \cal L^{\ii}$ is called a {\it Dunford-Schwartz operator} if
$$
\| T(f)\|_1\leq \| f\|_1 \ \ \forall \ \ f\in \cal L^1 \text{ \ \ and \ \ } \| T(f)\|_{\ii}\leq \| f\|_\ii \ \ \forall \ f \in \cal L^{\ii}.
$$
\end{df}

In what follows, we will write $T\in DS$ to indicate that $T$ is a Dunford-Schwartz operator. If $T$ is, in addition,
positive, we shall write $T\in DS^+$. It is clear that
$$
\|T\|_{\cal L^1 + \cal L^{\ii} \to  \cal L^1 + \cal L^{\ii}} \leq 1
$$
for all $T\in DS$ and $T(f )\prec\prec f$ for all $f \in \cal L^1 + \cal L^{\ii}$ \cite[Ch.II, \S 3, Sec.4]{kps}.
Therefore $T(E) \subset E$ for every fully symmetric space
$E$ and
$$
\| T\|_{E \to  E} \leq 1
$$
(see \cite[Ch.II, \S 4, Sec.2]{kps}).
In particular,  $T(\cal R_\mu) \subset \cal R_\mu$, and the restriction of $T$ on $\cal  R_\mu$ is a linear contraction.

\begin{df} \label{d3}
We say that a fully symmetric space $E\su \mc L^1(\Om)+\mc L^\ii(\Om)$ possesses the
{\it individual ergodic theorem property}, writing $E\in$ IET$\,(\Om)$, if for every $f\in E$ and $T\in DS$ the averages (\ref{e1}) converge a.e. to some $\widehat f\in E$.
\end{df}

Let $\chi_E$ be the characteristic function of a set $E \in \mathcal A$. Denote
$\mathbf 1 = \chi_\Om$. The following fact was noticed in \cite[Proposition 2.1]{ccl}.

\begin{pro}\label{p1}
If $\mu(\Om) = \infty$, then a symmetric space $E \su \cal L^1+\cal L^{\infty}$ is contained in
$\cal R_\mu$ if and only if $\mathbf 1\notin E$.
\end{pro}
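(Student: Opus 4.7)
The plan is to unwind the equivalent description
$$\cal R_\mu= \{f \in \cal L^1 + \cal L^{\ii}:  \mu\{|f|> \lb\}<\ii \text{ \ for all \ } \lb>0\}$$
given just above and combine it with the defining axiom of a symmetric space applied to the non-increasing rearrangement $\mu_t$.

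For the ``only if'' direction, assuming $E \su \cal R_\mu$, I would simply observe that under $\mu(\Om) = \ii$ one has $\mathbf 1 \notin \cal R_\mu$, since $\mu\{\mathbf 1 > \lb\} = \mu(\Om) = \ii$ for every $0 < \lb < 1$. This immediately forces $\mathbf 1 \notin E$.

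For the converse I would argue by contrapositive: starting from some $f \in E \sm \cal R_\mu$, the goal is to produce $\mathbf 1 \in E$. Pick $\lb_0 > 0$ with $\mu\{|f| > \lb_0\} = \ii$. From $\mu_t(f) = \inf\{\lb > 0 : \mu\{|f|>\lb\} \le t\}$, the estimate $\mu\{|f|>\lb\} \ge \mu\{|f|>\lb_0\} = \ii > t$ available for every $\lb \le \lb_0$ forces $\mu_t(f) \ge \lb_0$ at every finite $t > 0$. Meanwhile $\mu(\Om) = \ii$ yields $\mu_t(\mathbf 1) \equiv 1$ on $(0, \ii)$, hence $\mu_t(\lb_0 \mathbf 1) = \lb_0 \le \mu_t(f)$ for every $t > 0$. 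Since $f \in E$ and $E$ is symmetric, the symmetric-space axiom gives $\lb_0 \mathbf 1 \in E$, so $\mathbf 1 \in E$.

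There is essentially no obstacle here; the whole argument is a few lines of direct bookkeeping with the definitions of $\cal R_\mu$, the rearrangement $\mu_t$, and symmetric space. The only place where the assumption $\mu(\Om) = \ii$ enters essentially is the identity $\mu_t(\mathbf 1) \equiv 1$ on $(0, \ii)$, which would fail for $t \ge \mu(\Om)$ in the finite-measure case and is precisely what makes $\mathbf 1$ play the role of a ``maximal'' element among functions with non-vanishing rearrangement at infinity.
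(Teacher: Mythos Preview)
Your proof is correct. Note, however, that the paper does not actually supply a proof of this proposition: it is stated with the remark that the fact ``was noticed in \cite[Proposition 2.1]{ccl}'', and no argument is given here. Your argument is the natural one---one direction is immediate from $\mathbf 1\notin\mathcal R_\mu$ when $\mu(\Omega)=\infty$, and for the other, any $f\in E\setminus\mathcal R_\mu$ satisfies $\mu_t(f)\ge\lambda_0>0$ for all $t>0$, so $\mu_t(\lambda_0\mathbf 1)=\lambda_0\le\mu_t(f)$ forces $\mathbf 1\in E$ by the symmetric-space axiom---and is almost certainly what the cited reference contains. There is simply no in-paper proof to compare against.
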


Here is a Dunford-Schwartz pointwise ergodic theorem in a fully symmetric space $E\su \cal R_\mu$ \cite[Theorem 4.3]{ccl}:

\begin{teo}\label{t1} Let $(\Om, \mu)$  be an infinite  measure space. If  $E\su \mc L^1+\mc L^\ii$
is a fully symmetric space with $\mathbf 1\notin E$, then $E\in IET(\Om)$. In particular, $\mc R_\mu\in IET(\Om)$.
\end{teo}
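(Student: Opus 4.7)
The plan is to first reduce to proving a.e.\ convergence on $\mc R_\mu$ and then use full symmetry of $E$ to place the limit back in $E$. By Proposition \ref{p1}, the hypothesis $\mathbf{1}\notin E$ forces $E\su \mc R_\mu$, so every $f\in E$ is in $\mc R_\mu$, and the averages $A_n(T,f)$ remain in $\mc R_\mu$ because $T(\mc R_\mu)\su\mc R_\mu$. Thus the task splits into two: (i) show $\{A_n(T,f)\}$ converges a.e.\ for every $f\in\mc R_\mu$ and $T\in DS$; (ii) identify the limit as an element of $E$.

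For (i) I would run a Banach-principle argument adapted to $\mc R_\mu$. The dense subset is $\mc L^1\cap\mc L^\ii$, on which the classical Dunford-Schwartz theorem already yields a.e.\ convergence (since every such $f$ is, in particular, in $\mc L^1$). To bridge from this dense subset to all of $\mc R_\mu$ one needs a maximal inequality on $\mc R_\mu$: given $f\in\mc R_\mu$ and $\lb>0$, truncate $f$ at level $\lb/2$ to obtain $f=g+h$ with $\|h\|_\ii\le\lb/2$ and $g=(|f|-\lb/2)_+\operatorname{sgn}(f)$. Because $\{|f|>\lb/2\}$ has finite measure and $\int_0^1\mu_t(f)\,dt<\ii$, a short rearrangement computation shows $g\in\mc L^1$. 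Then $\sup_n|A_n(T,f)|\le\sup_n A_n(|g|)+\lb/2$, and the classical weak-type $(1,1)$ bound for $T\in DS$ delivers
\[
\mu\bigl\{\sup_n|A_n(T,f)|>\lb\bigr\}\le\frac{2\|g\|_1}{\lb}.
\]
Density plus this maximal inequality produces an a.e.\ limit $\widehat f$.

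For (ii) I would combine $T(u)\prec\prec u$ with the subadditivity of Hardy sub-majorization to get $A_n(T,f)\prec\prec f$ for every $n$. Lower semicontinuity of $\mu_t$ under a.e.\ convergence gives $\mu_t(\widehat f)\le\liminf_n\mu_t(A_n(T,f))$, and Fatou in the $t$-variable then yields $\widehat f\prec\prec f$. Full symmetry of $E$ puts $\widehat f$ in $E$ with $\|\widehat f\|_E\le\|f\|_E$. The main obstacle is the maximal inequality step, since the norm $\|\cdot\|_{\mc L^1+\mc L^\ii}$ is not of $\mc L^1$-type while the classical Dunford-Schwartz maximal inequality lives on $\mc L^1$; the truncation trick above bridges the gap by isolating an $\mc L^1$ piece and handling the remainder with the trivial $\mc L^\ii$ bound on $\sup_n|A_n(h)|$.
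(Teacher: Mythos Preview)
The paper does not prove Theorem~\ref{t1}; it is quoted verbatim from \cite[Theorem~4.3]{ccl}, so there is no in-paper argument to compare against. Your outline is correct and is essentially the standard proof: Proposition~\ref{p1} gives $E\su\mc R_\mu$, a.e.\ convergence is obtained on $\mc R_\mu$, and full symmetry together with $A_n(T,f)\prec\prec f$ and Fatou for rearrangements places the limit in $E$.

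Two remarks on part~(i). First, the Banach-principle packaging works, but it is heavier than necessary: for $f\in\mc R_\mu$ and any $\ep>0$ one can write $f=g+h$ with $g=(|f|-\ep)_+\operatorname{sgn}(f)\in\mc L^1$ and $\|h\|_\ii\le\ep$; then $A_n(T,g)$ converges a.e.\ by the classical Dunford--Schwartz theorem while $\sup_n|A_n(T,h)|\le\ep$, so the a.e.\ oscillation of $\{A_n(T,f)\}$ is at most $2\ep$, and letting $\ep\downarrow 0$ finishes without ever invoking a maximal inequality or density. Second, if you do keep the Banach-principle route, the sentence ``density plus this maximal inequality produces an a.e.\ limit'' hides a small but non-automatic step: you must check that when $\|f-f_k\|_{\mc L^1+\mc L^\ii}\to 0$, the $\mc L^1$-norms of the truncated pieces $g_k=(|f-f_k|-\lb/2)_+\operatorname{sgn}(f-f_k)$ tend to $0$. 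This follows because $\mu_1(f-f_k)\le\int_0^1\mu_t(f-f_k)\,dt\to 0$ forces $\mu\{|f-f_k|>\lb/2\}<1$ eventually, after which $\|g_k\|_1\le\int_0^1\mu_t(f-f_k)\,dt\to 0$; but it deserves a line. Also, your displayed bound should read $\sup_n|A_n(T,g)|$ rather than $\sup_n A_n(|g|)$, since $T$ need not be positive.
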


\begin{df}\label{d4}
 We say that a measure space $(\Om,\mu)$  is {\it quasi-non-atomic}
if it has finitely many atoms or its atoms have the same measure.
\end{df}

Denote
$$
\mc C_\mu=\left \{ f\in \mc L^1+\mc L^\ii: \ \{A_n(T,f)\} \text{ \ converges a.e. for all} \ T\in DS\right \}.
$$
By Theorem \ref{t1}, $\mc R_\mu\su\mc C_\mu$. 

Our main result is Theorem \ref{t10} below stating that  if a $\sigma-$finite measure space $(\Om,\mu)$ 
is quasi-non-atomic, then
the fully symmetric space $\mc R_\mu$ is maximal relative to Dunford-Schwartz pointwise ergodic theorem, that is,
$$
\mc C_\mu=\mc R_\mu.
$$

We will begin with the case when $\Om=(0,\ii)$ equipped with Lebesgue measure; see Theorem \ref{t2}.

In order to establish Theorem \ref{t2} for a quasi-non-atomic measure space, we will need some properties of non-atomic
$\sigma-$finite measure spaces.

Let  $(\Om_1, \mathcal A_1, \mu_1)$ and $(\Om_2, \mathcal A_2, \mu_2)$ be measure spaces.
A mapping $\sigma: \Om_1\to \Om_2$ is said to be a {\it measure-preserving transformation (m.p.t.)} if
$$
\sigma^{-1}(E) \in  \mc A_1 \text{ \ and \ } \mu_1\left (\sigma^{-1}(E)\right )=\mu_2(E) \text{ \ for every \ } E \in \mc A_2.
$$

If $f \in \cal L^0(\Om)$, denote $\supp(f) = \{f \neq 0\}$. The following
property of  non-atomic measure spaces can be found in \cite[Ch.2, Corollary 7.6]{bs}.

\begin{teo}\label{t7}
Let $(\Om, \mu)$ be a non-atomic $\sigma-$finite measure space. Then, given $0\leq f\in\mc R_\mu$,
there is a surjective m.p.t. $\sigma: \supp(f)\to\supp(\mu_t(f))$
such that
$$
f =  \mu_t(f) \circ \sigma
$$
on $\supp(f)$.
\end{teo}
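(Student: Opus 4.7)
The plan is to reduce to Lebesgue measure on a half-line and then exploit the generalized-inverse relationship between the distribution function $d_f(\lambda)=\mu\{f>\lambda\}$ and the rearrangement $\mu_t(f)$.

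Set $S=\supp(f)$. Since $(\Om,\mu)$ is non-atomic and $\sigma$-finite, the standard isomorphism theorem for non-atomic measure spaces provides a measure-preserving bijection $\phi:S\to(0,\mu(S))$ with Lebesgue measure on the interval. The push-forward $g=f\circ\phi^{-1}$ has the same distribution as $f$, hence the same non-increasing rearrangement. It therefore suffices to build a measure-preserving $\psi:(0,\mu(S))\to(0,\mu(S))=\supp(\mu_t(f))$ satisfying $g=\mu_t(f)\circ\psi$; then $\sigma=\psi\circ\phi$ is the required map.

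From the definition of $\mu_t(f)$ one verifies that for every $\lambda>0$ the set $I_\lambda:=\{t>0:\mu_t(f)=\lambda\}$ equals the half-open interval $[d_f(\lambda),d_f(\lambda^-))$, of Lebesgue measure $\mu\{f=\lambda\}$. Let $J=\{\lambda>0:\mu\{f=\lambda\}>0\}$, which is at most countable by monotonicity of $d_f$. I define $\psi$ piecewise. For each $\lambda\in J$ the sets $\{g=\lambda\}\su(0,\mu(S))$ and $I_\lambda$ are non-atomic and of equal Lebesgue measure, so let $\psi$ restrict to any measure-preserving bijection between them. For $s$ in the generic part $S_0=\{g\notin J\}$, set $\psi(s)=d_f(g(s))$, which lands in $T_0:=(0,\mu(S))\sm\bigcup_{\lambda\in J}I_\lambda$, since $\psi(s)\in I_\lambda$ would force $g(s)=\lambda\in J$.

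The identity $g=\mu_t(f)\circ\psi$ holds on each piece by construction: on $\{g=\lambda\}$ for $\lambda\in J$ both sides equal $\lambda$ because $\mu_t(f)\equiv\lambda$ on $I_\lambda$; on $S_0$, the relation $\mu_{d_f(g(s))}(f)=g(s)$ is the standard inverse identity, valid because $d_f$ is continuous at $g(s)\notin J$. The main obstacle is verifying that $\psi|_{S_0}$ is measure-preserving onto $T_0$; this rests on the equivalence $d_f(\lambda)\leq t\iff\mu_t(f)\leq\lambda$ together with the key fact that $d_f(\mu_t(f))=t$ for $t\in T_0$ (flat parts of $d_f$ correspond precisely to jumps of $\mu_t(f)$, already absorbed in the $I_\lambda$-pieces). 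A short direct computation then yields $|\{s\in S_0:\psi(s)\leq t\}|=|T_0\cap(0,t]|$ for every $t\in T_0$, which is the required measure preservation. Gluing the pieces, and adjusting on a null set if needed, produces the surjective measure-preserving transformation.
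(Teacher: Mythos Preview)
The paper does not give its own proof of this statement; it simply records it as a known fact and cites Bennett--Sharpley, \emph{Interpolation of Operators}, Ch.~2, Corollary~7.6. So there is nothing in the paper to compare against, and I assess your argument on its own terms.

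Your core construction is the right one and is essentially the Bennett--Sharpley argument: put $\sigma=d_f\circ f$ on the part of $\supp(f)$ where $f$ avoids the countable set $J$ of values carrying positive mass, and on each level set $\{f=\lambda\}$ with $\lambda\in J$ send that set onto the corresponding plateau $I_\lambda$ of $\mu_t(f)$ by a measure-preserving map. The verification that the generic piece is measure-preserving does reduce, as you say, to the identity $d_f(\mu_t(f))=t$ for $t\notin\bigcup_{\lambda\in J}I_\lambda$, and your reasoning there is sound (the description of $I_\lambda$ as $[d_f(\lambda),d_f(\lambda^-))$ can be off by an endpoint, but that is a null set and does not affect anything).

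There is, however, a genuine gap in your opening reduction. You invoke a ``standard isomorphism theorem'' producing a measure-preserving \emph{bijection} $\phi:S\to(0,\mu(S))$. For an arbitrary non-atomic $\sigma$-finite measure space no such point map need exist: by Maharam's classification the measure algebra of $(S,\mu)$ is not in general isomorphic to that of Lebesgue measure (e.g.\ $\{0,1\}^\kappa$ with product coin-flipping measure for uncountable $\kappa$), and even when the algebras are isomorphic a point realization requires extra structure such as standardness. The theorem as stated carries no separability hypothesis, so this step is unjustified. The fix is simply to delete the reduction: run your construction directly on $\Om$, setting $\sigma(\omega)=d_f(f(\omega))$ on $\{f\notin J\}\cap\supp(f)$, and for each $\lambda\in J$ use a surjective m.p.t.\ from the non-atomic finite-measure set $\{f=\lambda\}$ onto the interval $I_\lambda$. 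Such a map always exists for non-atomic finite measure spaces (this is exactly the content of Bennett--Sharpley's Proposition~7.4, which the paper also invokes elsewhere), and note that the theorem asks only for a surjective m.p.t., not a bijection. With that change the rest of your argument goes through unchanged.
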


Let $\nabla$ be a complete Boolean algebra. Denote by $Q(\nabla)$ the Stone compact of $\nabla$, and let $C_\infty(Q(\nabla))$ be the algebra of continuous functions $f: Q(\nabla)\rightarrow [-\infty,+\infty]$ assuming possibly the
values $\pm\infty$  on nowhere dense subsets of $Q(\nabla)$ \cite[1.4.2]{ku}.  Let $C(Q(\nabla))$ be the Banach algebra of continuous functions $f: Q(\nabla)\rightarrow \mathbb R$ relative to the norm
$$
\| f \|_{\infty} = \max_{t \in Q(\nabla)}|f(t)|.
$$

We will employ the following extension result.

\begin{teo}\label{t4}
If $\varphi:\nabla_1\to \nabla_2$ is an isomorphism of complete Boolean algebras,
then there exists a unique  isomorphism $\Phi: C_\infty(Q(\nabla_1))\rightarrow C_\infty(Q(\nabla_2))$
such that $\Phi(e)=\varphi(e)$ for all $e \in  \nabla_1$.
\end{teo}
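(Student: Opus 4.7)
The plan is to invoke Stone duality. A complete Boolean algebra $\nabla$ is recovered from its Stone compact $Q(\nabla)$ as the algebra of clopen subsets, and completeness of $\nabla$ is equivalent to extremal disconnectedness of $Q(\nabla)$. The isomorphism $\varphi:\nabla_1\to\nabla_2$ therefore lifts, contravariantly, to a homeomorphism $\sigma:Q(\nabla_2)\to Q(\nabla_1)$ which on ultrafilters sends $\om\in Q(\nabla_2)$ to $\{e\in\nabla_1:\varphi(e)\in\om\}$ and on clopens satisfies $\sigma^{-1}(e)=\varphi(e)$ for every $e\in\nabla_1$, where each element of $\nabla_i$ is identified with its corresponding clopen in $Q(\nabla_i)$.

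With $\sigma$ in hand, I would define $\Phi(f)=f\circ\sigma$ for $f\in C_\infty(Q(\nabla_1))$. Since $\sigma$ is a homeomorphism, it maps nowhere dense sets to nowhere dense sets, so $\Phi(f)$ is a continuous $[-\infty,+\infty]$-valued function on $Q(\nabla_2)$ which is finite off a nowhere dense set; thus $\Phi$ lands in $C_\infty(Q(\nabla_2))$, and composition with $\sigma^{-1}$ supplies a two-sided inverse. The algebra operations on $C_\infty$ are defined pointwise off the nowhere dense sets where the arguments take infinite values and then extended by continuity, such extensions being unique thanks to extremal disconnectedness; pulling back by a homeomorphism manifestly respects this construction, so $\Phi$ is an algebra isomorphism. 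The required identity $\Phi(e)=\varphi(e)$ is then exactly the clopen-level statement $\sigma^{-1}(e)=\varphi(e)$ read off the associated characteristic functions.

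For uniqueness, the linear span of characteristic functions of clopens is uniformly dense in $C(Q(\nabla_1))$ by Stone-Weierstrass (clopens separate points of a Stone compact), so any algebra isomorphism agreeing with $\varphi$ on $\nabla_1$ is pinned down on all of $C(Q(\nabla_1))$. Every $0\leq f\in C_\infty(Q(\nabla_1))$ is the supremum of the bounded continuous functions $f\wedge n\mathbf 1$, and any algebra isomorphism on $C(Q(\nabla_1))$ sends squares to squares, hence is positivity-preserving, hence order-preserving, and so must commute with such monotone suprema; this determines the extension of $\Phi$ to all of $C_\infty(Q(\nabla_1))$.

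The main obstacle I anticipate is not any single deep step but rather the careful bookkeeping around $C_\infty$: making precise that its ring operations, though defined only pointwise off certain nowhere dense sets, admit unique continuous extensions to $Q(\nabla_i)$, and checking that pullback under $\sigma$ commutes with these extensions. This is precisely where the hypothesis that the $\nabla_i$ are complete (equivalently, that the Stone spaces are extremally disconnected) is essential; without it, the pointwise operations would not extend unambiguously and $C_\infty$ would fail to be an algebra to begin with.
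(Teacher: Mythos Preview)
Your proof is correct but follows a genuinely different route from the paper. You pass immediately to the Stone-dual homeomorphism $\sigma:Q(\nabla_2)\to Q(\nabla_1)$ and define $\Phi$ as pullback $f\mapsto f\circ\sigma$, leaving as the main check that the partial pointwise operations on $C_\infty$ and their unique continuous extensions (guaranteed by extremal disconnectedness) are respected by composition with a homeomorphism. The paper, by contrast, never materialises $\sigma$: it extends $\varphi$ linearly to step elements, then by $\|\cdot\|_\infty$-density to an isometric isomorphism $\Phi_0:C(Q(\nabla_1))\to C(Q(\nabla_2))$, and finally reaches $C_\infty$ by choosing for each $f$ a partition of unity $\{e_i\}\subset\nabla_1$ with $fe_i\in C(Q(\nabla_1))$, gluing the pieces $\Phi_0(fe_i)$, and verifying the result is independent of the partition. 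Your topological argument is cleaner and makes uniqueness almost immediate (any algebra isomorphism of $C_\infty$ is an order isomorphism, hence determined by its restriction to $C$ via $f=\sup_n(f\wedge n\mathbf 1)$); the paper's step-by-step algebraic construction avoids having to discuss the well-definedness of the ring structure on $C_\infty$ but pays for it with the partition-independence computation.
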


\begin{proof}
Show first that the isomorphism $\varphi$ extends up to an isomorphism $\Phi_0: C(Q(\nabla_1))\rightarrow C(Q(\nabla_2))$.
Let $\mathbb R(\nabla_1) $ be the  subalgebra of step-elements of the algebra
$C_\infty(Q(\nabla_1))$, that is, elements of the form
$f=\sum\limits_{i=1}^k \lambda_i e_i$, where $e_i \in \nabla_1$, $e_i e_j = 0$ if $i \neq j$,
and $\lambda_i \in \mathbb R$, $i=1,..., k$.
It is clear that the  subalgebra $\mathbb R(\nabla_1) $ is  dense  in the Banach algebra $ (C(Q(\nabla_1)), \| \cdot \|_{\infty})$.

Put
$$ \Phi_0(f) = \sum\limits_{i=1}^k \lambda_i \varphi(e_i).$$
Since $\varphi$  is an isomorphism of Boolean algebras, it follows that $\Phi_0: \mathbb R(\nabla_{1})\to \mathbb R(\nabla_{2})$
is an isomorphism and an isometry with respect to the norm $\| \cdot \|_{\infty}$.  Due to the density of the subalgebra
$\mathbb R(\nabla_{i}) $ in the Banach algebra $ (C(Q(\nabla_i)), \| \cdot \|_{\infty})$, $i=1,2$,
$\Phi_0$ uniquely extends to an  isomorphism
$\Phi_0: C(Q(\nabla_1))\to C(Q(\nabla_2))$. It is clear that $\Phi_0(e)=\varphi(e)$ for all $e \in  \nabla_{1}$.

If $f \in C_\infty(Q(\nabla_1))$, then there exists a  partition
$\{e_i \} $ of unity $\mathbf 1_{\nabla_{1}} $, such that $ f e_i \in
C(Q(\nabla_1))$ for each $i$. Since $\varphi $ is an  isomorphism of Boolean algebras,
it follows that $ \{\varphi (e_i) \} $ is a partition of unity $\mathbf 1_{\nabla_{2}} $. Therefore, there is a unique
$g \in C_\infty(Q(\nabla_2))$, such that
$$
\Phi_0 (fe_i) \varphi (e_i) = g \varphi (e_i) \text{ \ for all \ }  i.
$$
If $ \{q_j \} $ is another partition of unity $\mathbf 1_{\nabla_{1}}$ such that  $fq_j \in C(Q(\nabla_1))$ for all $j$, then
$\{ p_{ij} = e_iq_j\}$  is also a  partition of unity $\mathbf 1_{\nabla_{1}} $ and $f p_{ij} \in C(Q(\nabla_1))$ for any $ i, j $.
If $h\in C_\infty(Q(\nabla_2))$ is such that $h \varphi(q_j) = \Phi_0(fq_j) \varphi(q_j) $
for all $ j $, then
$$
g \varphi(p_{ij}) = g \varphi(e_i)\varphi(q_j) =
\Phi_0(fe_i) \varphi(e_i) \varphi(q_j) = \Phi_0(fe_iq_j) =
$$
$$
= \Phi_0(fq_j) \varphi(q_j) \varphi(e_i) = h \varphi(q_j) \varphi(e_i) = h \varphi (p_{ij})
$$
for any $ i, j $. Since
$$
 \sup\limits_{i, j } \varphi(p_{ij}) = \left (\sup\limits_{i} \varphi(e_i)\right )
\left (\sup\limits_ {j} \varphi(q_j)\right ) = \mathbf 1_{\nabla_{2}},
$$
we have $h=g$. Therefore, we can define the mapping $\Phi: C_\infty(Q(\nabla_1))\rightarrow C_\infty(Q(\nabla_2))$
by $\Phi(f) = g $. Clearly, $\Phi: C_\infty(Q(\nabla_1))\rightarrow C_\infty(Q(\nabla_2))$ is a unique isomorphism
such that $\Phi(e) = \varphi(e) $ for each $e \in \nabla_1$.
\end{proof}

If $\nabla_\mu$ is the complete Boolean algebra of the classes $e=[E]$ of $\mu-$a.e. equal sets in
$\mc A$, then
$$
\widehat{\mu}(e) = \mu(E)
$$
is a strictly positive  measure on $\nabla_\mu$.
The following is a refinement of Theorem \ref{t4} in the case of measure spaces.

\begin{teo}\label{t5}
Let $(\Om_i, \mu_i)$ be a $\sigma-$finite measure space, $ i=1,2$,
and let $\varphi:\nabla_{\mu_1}\to \nabla_{\mu_2}$ be an isomorphism such that
\begin{equation}\label{e2}
\widehat{\mu}_2(\varphi(e)) = \widehat{\mu}_1(e), \ e\in  \nabla_{\mu_1}.
\end{equation}
Then there exists a unique  isomorphism $\Phi: \cal L^0(\Om_1)\rightarrow \cal L^0(\Om_2)$ such that
\begin{enumerate}
\item $\Phi(e)=\varphi(e)$ for all $e \in \nabla_{\mu_1}$;
\item $\Phi: \cal L^1(\Om_1) \to \cal L^1(\Om_2)$ and $\Phi: \cal L^\ii(\Om_1) \to \cal L^\ii(\Om_2)$
are bijective linear isometries;
\item If $f_n, f \in \cal L^0(\Om_1)$ and $g_n, g \in \cal L^0(\Om_2)$, then $\Phi(f_n)\rightarrow\Phi(f)$ ($\mu_2-$a.e.)
if and only if $f_n \to f$ ($\mu_1-$a.e.)
and $\Phi^{-1}(g_n)\to \Phi^{-1}(g)$ ($\mu_1-$a.e.)
if and only if $g_n \to g$ ($\mu_2-$a.e.).
\end{enumerate}
\end{teo}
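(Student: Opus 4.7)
The plan is to bootstrap Theorem \ref{t4} via the standard Gelfand-type identification $\mc L^0(\Om_i,\mu_i)\cong C_\infty(Q(\nabla_{\mu_i}))$ as ordered $\ast$-algebras, under which the step-elements $\mathbb R(\nabla_{\mu_i})$ correspond precisely to $\mu_i$-simple functions. The isomorphism of Theorem \ref{t4}, transported through this identification, yields a candidate algebra isomorphism $\Phi:\mc L^0(\Om_1)\to\mc L^0(\Om_2)$ extending $\varphi$. Property (1) is then immediate, and uniqueness follows because any such extension must agree with $\varphi$ on step-elements by linearity and multiplicativity and, being order-continuous as an algebra homomorphism between the $C_\infty$-algebras, must therefore agree on all of $\mc L^0$.

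For property (2), on a simple integrable function $f=\sum_{i=1}^k \lb_i\chi_{E_i}$ with pairwise disjoint $E_i$ of finite $\mu_1$-measure, condition (\ref{e2}) yields
$$
\|\Phi(f)\|_1=\sum_{i=1}^k|\lb_i|\,\widehat{\mu}_2(\varphi(E_i))=\sum_{i=1}^k|\lb_i|\,\widehat{\mu}_1(E_i)=\|f\|_1,
$$
and the $\mc L^1$ isometry extends by density of integrable simple functions, with the inverse isometry provided by the analogous construction applied to $\varphi^{-1}$. For $\mc L^\ii$, since $\Phi$ is an order isomorphism with $\Phi(\mathbf 1_{\Om_1})=\mathbf 1_{\Om_2}$, it maps the order interval $[-\lb\mathbf 1_{\Om_1},\lb\mathbf 1_{\Om_1}]$ onto $[-\lb\mathbf 1_{\Om_2},\lb\mathbf 1_{\Om_2}]$, so essential suprema are preserved.

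Property (3) is the main point. I will exploit $\sigma$-finiteness to reduce to a.e.\ convergence on each set of finite measure, which rewrites as: for every $A\in\nabla_{\mu_1}$ with $\widehat{\mu}_1(A)<\ii$ and every $\ep>0$,
$$
\widehat{\mu}_1\Bigl(A\wedge\bigwedge_N\bigvee_{n\geq N}[|f_n-f|>\ep]\Bigr)=0.
$$
Each ingredient transports through $\varphi$: countable Boolean suprema and infima because $\varphi$ is an isomorphism of complete Boolean algebras, measure by (\ref{e2}), and the crucial spectral identity
$$
\varphi\bigl([|g|>\ep]\bigr)=\bigl[\,|\Phi(g)|>\ep\,\bigr]
$$
because $\Phi$ is a lattice/algebra isomorphism with $\Phi(\ep\mathbf 1_{\Om_1})=\ep\mathbf 1_{\Om_2}$ (e.g.\ $[|g|>\ep]$ is the support projection of $(|g|-\ep\mathbf 1)_+$, an algebraic construction preserved by $\Phi$). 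The main obstacle will be cleanly justifying this spectral identity within the $C_\infty$-representation and checking that the relevant spectral constructions commute with $\Phi$; once that is done, the two implications of (3) follow by symmetry under $\Phi\leftrightarrow\Phi^{-1}$.
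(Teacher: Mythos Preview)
Your argument is correct in outline, and for parts (1) and (2) it is essentially the paper's own proof. One imprecision in (2): saying the $\mc L^1$ isometry ``extends by density'' is circular, since $\Phi$ is already defined on all of $\mc L^0$ and you have not yet shown it is $\|\cdot\|_1$-continuous. The paper handles this by first defining the $\|\cdot\|_1$-isometric extension $U$ of $\Phi|_{\mathbb R(\nabla_{\mu_1})}$ and then proving $U=\Phi$ on $\mc L^1_+$ via monotone approximation $0\le f_n\uparrow f$ and the fact that $\Phi$, being an order isomorphism, respects monotone limits. Your $\mc L^\ii$ argument via order intervals is fine and equivalent to the paper's.

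The genuine divergence is in (3). You propose to transport the measure-theoretic description of a.e.\ convergence---$\limsup$ sets, spectral projections $[|g|>\ep]$, countable Boolean suprema/infima---through $\varphi$, and you correctly flag the spectral identity $\varphi([|g|>\ep])=[|\Phi(g)|>\ep]$ as the crux. This works, but it is considerably heavier than necessary. The paper's argument is one line: in $\mc L^0$ of a $\sigma$-finite space, a.e.\ convergence coincides with order convergence ($(o)$-convergence), i.e.\ $f_n\stackrel{(o)}{\to}f$ means there exist $g_n\le f_n\le h_n$ with $g_n\uparrow f$, $h_n\downarrow f$. Any order isomorphism of vector lattices trivially preserves $(o)$-convergence in both directions, and this immediately gives (3) with no spectral calculus required. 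Your route buys nothing extra here and introduces the very obstacle you identify; the $(o)$-convergence device removes it.
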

\begin{proof}
The existence of a unique  isomorphism $\Phi: \cal L^0(\Om_1)\to \cal L^0(\Om_2)$ with property (1)
follows directly from Theorem \ref{t4}.

(2) It is clear that $\Phi$ is the bijection from the algebra $\mathbb R(\nabla_{\mu_1})$ of step-elements
in $\cal L^0(\Om_1)$ onto the algebra $\mathbb R(\nabla_{\mu_2})$. By (\ref{e2}),
for every $f=\sum\limits_{i=1}^k \lambda_i e_i \in \mathbb R(\nabla_{\mu_1})$, we have
$$
\|\Phi(f)\|_1=\left \|\sum\limits_{i=1}^k \lambda_i \varphi(e_i)\right \|_1 =
\sum\limits_{i=1}^k |\lambda_i| \, \widehat\mu_2(\varphi(e_i))=
\sum\limits_{i=1}^k |\lambda_i| \, \widehat\mu_1(e_i) = \|f\|_1.
$$
Since the algebra $\mathbb R(\nabla_{\mu_i})$ is dense in  $(\cal L^1(\mu_i), \|\cdot\|_1), i=1, 2$, it follows that there exists a bijective linear isometry $U$ from $(\cal L^1(\Om_1), \|\cdot\|_1)$ onto $(\cal L^1(\Om_2), \|\cdot\|_1)$ such that $U(f) = \Phi(f)$ for every $f \in \mathbb R(\nabla_{\mu_1})$.
If $f \in \cal L^1_+(\Om_1)$, then there exists a sequence $\{f_n\} \su \mathbb R(\nabla_{\mu_1})$
such that $0 \leq f_n \uparrow f$.  Then we have  $\|f_n- f\|_1 \to 0$, which implies that $\|U(f_n)- U(f)\|_1 \to 0$. Therefore $\Phi(f_n)=U(f_n)\uparrow U(f)$. Since $\Phi: \cal L^0(\Om_1)\to \cal L^0(\Om_2)$ is an isomorphism, it follows that $\Phi(f_n)\uparrow \Phi(f)$, and we conclude that $\Phi(f) = U(f)$ for every $f \in \cal L^1_+(\Om_1)$.
Now, since $ \cal L^1(\Om_1)= \cal L_+^1(\Om_1)-\cal L^1_+(\Om_1)$, we obtain
$\Phi(x) = U(x)$ for all $\in \cal L^1(\Om_1)$,
hence $\Phi: (\cal L^1(\Om_1), \|\cdot\|_1) \to (\cal L^1(\Om_2), \|\cdot\|_1)$
 is a bijective linear isometry.

If $0 \leq f \in \cal L^{\ii}_+(\Om_1)$, we choose a sequence $\{f_n\} \subset \mathbb R(\nabla_{\mu_1})$ such that
$0 \leq f_n \uparrow f$ and $\|f_n- f\|_{\infty} \to 0$ and repeat the preceding argument.

(3) Since $\Phi: \cal L^0(\Om_1)\to \cal L^0(\Om_2)$ is an isomorphism, it follows that, given $f_n, f  \in  \cal L^0(\Om_1)$,
$$
\Phi(f_n) \stackrel{(o)}{\longrightarrow}  \Phi(f) \ \Longleftrightarrow \ f_n  \stackrel{(o)}{\longrightarrow}  f,
$$
where $ f_n  \stackrel{(o)}{\longrightarrow}  f$ is the $(o)-$convergence, that is, there exist $g_n, h_n \in \cal L^0(\Om_1)$
such that $g_n \leq f_n \leq h_n$ for all $n$ and $g_n \uparrow f, \ h_n \downarrow f$.
This completes the argument because $(o)-$convergence and convergence a.e. coincide
(see, for example, \cite[Ch.VI, \S 3]{vu}).
\end{proof}

We will also need some properties of conditional expectations.  

In the case of finite measure the following is known (see, for example, \cite[Ch.IV, \S IV.3]{ne}).

\begin{teo}\label{t3_5}Let $(\Om, \mathcal A, \mu)$ be a finite measure space, and let $\mathcal B$ be a
$\sigma-$subalgebra of $\mathcal A$. Then there exists a  positive linear contraction
$U:\cal L^1(\Om, \cal A, \mu)\rightarrow \cal L^1(\Om, \cal B, \mu)$ such that
\begin{enumerate}
\item $U(\cal L^1(\Om, \cal A, \mu)) =\cal L^1(\Om, \cal B, \mu)$ and
$U(\cal L^{\ii}(\Om, \cal A, \mu)) = \cal L^{\ii}(\Om, \cal B, \mu)$;

\item $U(\mathbf 1) = \mathbf 1$ and $\|U(f)\|_{\ii} \leq \|f\|_{\ii}$   for all $f \in \cal L^{\ii}(\Om, \mathcal A, \mu)$;

\item $U^2 = U$  and $\int_{\Omega} U(f) d \mu = \int_{\Omega} f d \mu$ for all $f \in \cal L^1(\Om, \mathcal A, \mu)$.
\end{enumerate}
\end{teo}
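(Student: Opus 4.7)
The plan is to construct $U$ directly from the Radon-Nikodym theorem and then verify each listed property in turn. Given $f\in \mc L^1(\Om, \mc A, \mu)$, I introduce the signed measure $\nu_f(B)=\int_B f\, d\mu$ on $\mc B$. Since $\mu$ is finite and $\nu_f \ll \mu|_{\mc B}$, the Radon-Nikodym theorem produces a $\mc B$-measurable function, which I denote $U(f)\in \mc L^1(\Om,\mc B,\mu)$, satisfying
\[
\int_B U(f)\, d\mu = \int_B f\, d\mu \quad \text{for all } B\in \mc B.
\]

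Linearity of $U$ follows from linearity of the integral together with the almost-sure uniqueness of the Radon-Nikodym derivative. Positivity is immediate: if $f\ge 0$, then $\nu_f$ is a positive measure, so $U(f)\ge 0$ a.e. Taking $B=\Om$ in the defining identity yields $\int U(f)\, d\mu = \int f\, d\mu$, whence $\|U(f)\|_1 \le \|U(f_+)\|_1 + \|U(f_-)\|_1 = \|f_+\|_1 + \|f_-\|_1 = \|f\|_1$, which gives both the $\mc L^1$-contraction and the trace-preserving identity in (3). Setting $f=\mathbf 1$ and comparing with $\int_B \mathbf 1\, d\mu = \mu(B)$ forces $U(\mathbf 1)=\mathbf 1$ by uniqueness.

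For the $\mc L^\ii$-bound, fix $f\in \mc L^\ii$ and set $c=\|f\|_\ii$. Then $c\pm f\ge 0$ a.e., so positivity of $U$ gives $U(c\pm f)\ge 0$, hence $|U(f)|\le c$ a.e. This establishes $U(\mc L^\ii(\Om,\mc A,\mu))\su \mc L^\ii(\Om,\mc B,\mu)$ together with the required norm bound in (2).

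The remaining surjectivity statements and the identity $U^2=U$ both collapse to a single observation: if $g\in \mc L^1(\Om,\mc B,\mu)$, then $g$ is already $\mc B$-measurable and trivially satisfies the defining identity with $g$ in place of $U(g)$, so uniqueness forces $U(g)=g$. Thus $U$ is the identity on $\mc L^1(\Om,\mc B,\mu)$ and on $\mc L^\ii(\Om,\mc B,\mu)$, which yields both surjectivity claims in (1); applying this to $g=U(f)\in \mc L^1(\Om,\mc B,\mu)$ delivers $U^2=U$. I do not expect a genuine obstacle in this argument: every step is standard once Radon-Nikodym has been invoked, and the only care required is the splitting $f=f_+-f_-$ to handle the signed measure $\nu_f$, which is routine for finite $\mu$.
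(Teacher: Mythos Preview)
Your argument is correct: this is precisely the standard Radon--Nikodym construction of conditional expectation, and every property you check follows as you indicate. The paper, however, does not supply its own proof of this statement; it is quoted as a known result with a citation to Neveu's textbook, so there is nothing to compare against beyond noting that your construction is the classical one and almost certainly matches what is in the cited reference.
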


The next theorem is a version of Theorem \ref{t3_5} for $\sigma-$finite measure.

\begin{teo}\label{t3_6} Let $(\Om, \mathcal A, \mu)$ be a $\sigma-$finite measure space, and let $\cal B$ be a
$\sigma-$subalgebra of $\cal A$. Then there exists $S\in DS^+(\Om, \cal A, \mu)$ such that
\begin{enumerate}
\item $S(\cal L^1(\Om, \mathcal A, \mu)) =\cal L^1(\Om, \mathcal B, \mu)$ and  $S(\cal L^{\ii}(\Om, \mathcal A, \mu)) = \cal L^{\ii}(\Om, \mathcal B, \mu)$;

\item $S(\mathbf 1) = \mathbf 1$;

\item $S^2 = S$  and $\int_{\Omega} S(f) d \mu = \int_{\Omega} f d \mu$ for all $f \in \cal L^1(\Om, \mathcal A, \mu)$.
\end{enumerate}
\end{teo}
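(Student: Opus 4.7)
The plan is to reduce to the finite measure case of Theorem \ref{t3_5} by partitioning $\Om$ into $\cal B$-measurable sets of finite measure and then gluing the conditional expectations obtained on each piece. The statement implicitly requires $(\Om,\cal B,\mu|_{\cal B})$ to be $\sigma$-finite, so that a disjoint partition $\Om=\bigsqcup_{n\geq 1} B_n$ with $B_n\in\cal B$ and $\mu(B_n)<\ii$ exists. Granted this, for each $n$ I set $\cal A_n=\cal A\cap B_n$, $\cal B_n=\cal B\cap B_n$, $\mu_n=\mu|_{B_n}$, and apply Theorem \ref{t3_5} to obtain a positive linear contraction $U_n:\cal L^1(B_n,\cal A_n,\mu_n)\to \cal L^1(B_n,\cal B_n,\mu_n)$ with the three listed properties.

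Next, I would define $S$ on $\cal L^1+\cal L^\ii$ piecewise by $S(f)|_{B_n}=U_n(f|_{B_n})$; this is meaningful because $\mu(B_n)<\ii$ gives $\cal L^1(B_n)+\cal L^\ii(B_n)=\cal L^1(B_n)$, which contains $f|_{B_n}$, and $U_n$ extends naturally to $\cal L^\ii(B_n)\su \cal L^1(B_n)$. Positivity and linearity of $S$ are inherited from the $U_n$, and the disjointness of $\{B_n\}$ yields
$$\|S(f)\|_1=\sum_n\|U_n(f|_{B_n})\|_1\leq \sum_n\|f|_{B_n}\|_1=\|f\|_1$$
as well as $\|S(f)\|_\ii=\sup_n\|U_n(f|_{B_n})\|_\ii\leq\|f\|_\ii$, so $S\in DS^+$. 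Property (2) follows from $U_n(\chi_{B_n})=\chi_{B_n}$ summed over $n$.

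Property (1) is then handled piece by piece: the inclusions $S(\cal L^1(\cal A))\su \cal L^1(\cal B)$ and $S(\cal L^\ii(\cal A))\su \cal L^\ii(\cal B)$ come from Theorem \ref{t3_5}(1), while surjectivity follows from the observation that, for $g$ in either $\cal L^1(\cal B)$ or $\cal L^\ii(\cal B)$, each $g|_{B_n}$ is already $\cal B_n$-measurable and hence fixed by the projection $U_n$, giving $S(g)=g$. Property (3) is then immediate: $S^2=S$ from $U_n^2=U_n$, and
$$\int_\Om S(f)\,d\mu=\sum_n\int_{B_n}U_n(f|_{B_n})\,d\mu_n=\sum_n\int_{B_n}f\,d\mu=\int_\Om f\,d\mu$$
by the third property of each $U_n$. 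The only genuinely delicate step is the very first: the assertion hinges on $\cal B$ itself being $\sigma$-finite, since otherwise $\cal L^1(\Om,\cal B,\mu)$ cannot even be given its usual meaning. Everything after the partition is a mechanical lift of the finite-measure theorem.
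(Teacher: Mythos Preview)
Your proof is correct and follows the same overall strategy as the paper: both partition $\Om$ into a disjoint sequence $\{G_n\}\subset\cal B$ of sets of finite measure (tacitly assuming, as you rightly flag, that $(\Om,\cal B,\mu)$ is itself $\sigma$-finite), apply Theorem~\ref{t3_5} on each piece, and glue. The one noteworthy difference lies in how the glued operator is extended from $\cal L^1$ to $\cal L^1+\cal L^\ii$. The paper first defines $U(f)=\sum_n U_n(f\chi_{G_n})$ only for $f\in\cal L^1$, then invokes an external result \cite[Theorem~3.1]{ccl} to obtain $S\in DS^+$ agreeing with $U$ on $\cal L^1$, and finally verifies the $\cal L^\ii$ behaviour of $S$ via the $\sigma(\cal L^\ii,\cal L^1)$-continuity supplied by that same reference. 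Your route is more self-contained: since $\mu(B_n)<\ii$ forces $\cal L^1(B_n)+\cal L^\ii(B_n)=\cal L^1(B_n)$, each $U_n$ already acts on the full restriction $f|_{B_n}$, so you can define $S$ directly on all of $\cal L^1+\cal L^\ii$ by the piecewise formula and read off the DS bounds and properties (1)--(3) immediately. This buys simplicity and avoids the external citation; the paper's version has the mild conceptual advantage of exhibiting $S$ as the canonical DS extension of a given $\cal L^1$-contraction.
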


\begin{proof}
If $\mu(\Om) < \ii$, then Theorem \ref{t3_6} follows from Theorem \ref{t3_5}. So, let $\mu(\Om) = \ii$. Then there exists the sequence $\{G_n\}_{n=1}^\infty \subset \cal B$ such that $\mu(G_n) < \ii$ for all $n$, $G_n\cap G_k = \varnothing$
if $n\neq k$, and $\bigcup_{n=1}^\infty G_n = \Omega$. Let  $\cal A_{G_n} = \{G_n\cap E: E \in \cal A\}$ and
$\cal B_{G_n} = \{G_n\cap E: E \in \cal B\}, \ n \in \mathbb N$. By Theorem \ref{t3_5}, there exists a conditional expectation
$U_n:\cal L^1(\Om, \mathcal  A_{G_n}, \mu)\rightarrow \cal L^1(\Om, \mathcal B_{G_n}, \mu)$ satisfying
conditions (i)-(iii) of Theorem \ref{t3_5}.

Since $\int_{\Omega} |f| d \mu = \sum_{n=1}^\infty \int_{G_n} |f| d \mu$, $f \in \cal L^1(\Om, \mathcal A, \mu)$,
the map $U:\cal L^1(\Om, \mathcal A, \mu)\rightarrow \cal L^1(\Om, \mathcal B, \mu)$ given by
$$
U(f) =  \sum_{n=1}^\infty U_n( f\chi_{G_n})
$$
is well-defined.
It clear that $U$ is a positive linear $\|\cdot\|_1-$contraction such that
$\|U(f)\|_{\infty} \leq \|f\|_{\infty}$ for all $f \in \cal L^1(\Om, \mathcal A, \mu)\cap \cal L^{\infty}(\Om, \mathcal A, \mu)$
and $U(g) =g$ for all $g \in  L^1(\Om, \mathcal B, \mu)$.

By \cite[Theorem 3.1]{ccl}, there is $S \in DS^+(\Om, \cal A, \mu)$ such that $S(f) = U(f)$ for all
$ f \in \cal L^1(\Om, \cal A, \mu)$; in particular, $S(g) = g$ for all $g \in  L^1(\Om, \mathcal B, \mu)$.

If $0\leq f \in \cal L^{\ii}(\Om, \mathcal A, \mu)$, then $0\leq f\chi_{G_n} \in \cal L^{\ii}(\Om, \mathcal  A_{G_n}, \mu)$ and $\sum_{n=1}^k f\chi_{G_n} \uparrow f$ as $k \rightarrow \ii$. Since, by \cite[Theorem 3.1]{ccl},
$S| \cal L^{\ii}(\Om, \mathcal A, \mu)$ is $\sigma(\cal L^{\ii}, \cal L^1)-$continuous, it follows that
$$
\sum_{n=1}^k U_n( f\chi_{G_n})=S\left (\sum_{n=1}^k f\chi_{G_n}\right ) \uparrow S(f).
$$
as $k\to \ii$. Besides, for any $k$ we have
$$
\left \|U\left (\sum_{n=1}^k f\chi_{G_n}\right )\right \|_\ii \leq \left \|\sum_{n=1}^k f\chi_{G_n}\right \|_\ii \leq \| f\|_\ii,
$$
hence $S(\cal L^{\ii}(\Om, \cal A, \mu)) = \cal L^{\ii}(\Om, \mathcal B, \mu)$ and $\|S(f)\|_{\ii} \leq \|f\|_{\ii}$
if $f \in \cal L^{\ii}(\Om, \cal A, \mu)$.

From the definition of $U$ and $\int_{\Omega} |f| d \mu = \sum_{n=1}^\infty \int_{G_n} |f| d \mu$,
$f \in \cal L^1(\Om, \cal A, \mu)$, it follows that $S(\cal L^1(\Om, \mathcal A, \mu))=\cal L^1(\Om, \mathcal B, \mu)$.
It is also clear that  $S(\mathbf 1) = \mathbf 1$,  $S^2 = S$,  and $\int_{\Omega} S(f) d \mu = \int_{\Omega} f d \mu$ for all $f \in \cal L^1(\Om, \mathcal A, \mu)$.
\end{proof}

If $(\Om,\cal A,\mu)$ be a $\sigma-$finite measure space,
$E \in \mathcal A$ and $\cal A_{E} = \{E\cap G: G \in \cal A\}$, then $(E, \mathcal A_E, \mu)$ is also  a $\sigma-$finite measure space. We will need the next corollary of Theorem \ref{t3_6}.

\begin{cor}\label{t3_7} Let $(\Om, \mathcal A, \mu)$ be a $\sigma-$finite measure space,
and let  $E \in \cal A$. If $\cal B$  is a $\sigma-$subalgebra of
$\mathcal A_E$ and $T\in DS(E, \cal B, \mu)$, then there exists  $\widehat{T}\in DS(\Om, \cal A, \mu)$ such that
$$
\widehat T(g) = T(g) \ \ \text{and} \ \ A_n(\widehat T,g) =  A_n(T,g)
$$
for all $g \in \cal L^1(E, \cal B,\mu) + \cal L^{\ii}(E, \cal B,\mu)$ and $n$.
\end{cor}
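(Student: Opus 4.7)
The plan is to manufacture $\widehat{T}$ as a four-step composition: restrict an $\mathcal{A}$-measurable function on $\Omega$ to $E$, then project onto $\mathcal{B}$-measurable functions using the conditional expectation supplied by Theorem \ref{t3_6}, then apply $T$, and finally extend the result by zero back to $\Omega$. Explicitly, apply Theorem \ref{t3_6} to $(E,\cal A_E,\mu)$ with $\sigma$-subalgebra $\cal B$ to obtain $S\in DS^+(E,\cal A_E,\mu)$ satisfying $S(\cal L^p(E,\cal A_E,\mu))=\cal L^p(E,\cal B,\mu)$ for $p=1,\ii$, together with $S^2=S$. Then, for $f\in \cal L^1(\Om,\cal A,\mu)+\cal L^\ii(\Om,\cal A,\mu)$, set
$$
\widehat{T}(f)= T\bigl(S(f\chi_E)\bigr)\cdot\chi_E,
$$
viewed as the $\cal A$-measurable function on $\Om$ that equals $T(S(f\chi_E))$ on $E$ and $0$ off $E$.

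I would first check that $\widehat{T}$ is well-defined: multiplication by $\chi_E$ is a contraction on both $\cal L^1(\Om)$ and $\cal L^\ii(\Om)$ with image in the corresponding spaces over $(E,\cal A_E,\mu)$; $S$ then lands in $\cal L^1(E,\cal B,\mu)+\cal L^\ii(E,\cal B,\mu)$, which is the domain of $T$; finally, extension by $0$ preserves both norms. Chaining the contractions yields $\|\widehat{T}(f)\|_1\leq \|f\|_1$ for $f\in\cal L^1(\Om)$ and $\|\widehat{T}(f)\|_\ii\leq \|f\|_\ii$ for $f\in\cal L^\ii(\Om)$, so $\widehat{T}\in DS(\Om,\cal A,\mu)$. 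Linearity is immediate from linearity of the four building blocks.

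Next, I would check that $\widehat{T}$ genuinely extends $T$. Fix $g\in \cal L^1(E,\cal B,\mu)+\cal L^\ii(E,\cal B,\mu)$, viewed on $\Om$ by zero extension. Then $g\chi_E=g$, and because $g$ is already $\cal B$-measurable, property (1) of Theorem \ref{t3_6} together with $S^2=S$ gives $S(g)=g$; in fact any such $g$ can be written as $g=S(h)$ so $S(g)=S^2(h)=S(h)=g$. Hence $\widehat{T}(g)=T(g)\cdot\chi_E=T(g)$, the final equality holding because $T(g)$ already vanishes off $E$. For the averages, observe that $T(g)$ again lies in $\cal L^1(E,\cal B,\mu)+\cal L^\ii(E,\cal B,\mu)$, so the previous identity applies recursively: by induction, $\widehat{T}^k(g)=T^k(g)$ for every $k$, and therefore $A_n(\widehat{T},g)=A_n(T,g)$ for all $n$.

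The only potentially delicate point is that we must know a conditional-expectation-type operator exists on $(E,\cal A_E,\mu)$ that sends $\cal L^\ii(E,\cal A_E,\mu)$ into $\cal L^\ii(E,\cal B,\mu)$ and fixes $\cal B$-measurable functions, but this is precisely the content of Theorem \ref{t3_6}, so the corollary reduces to the clean compositional argument outlined above.
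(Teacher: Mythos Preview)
Your proposal is correct and takes essentially the same approach as the paper: apply Theorem~\ref{t3_6} on $(E,\mathcal A_E,\mu)$ to obtain the conditional expectation $S$, then set $\widehat T(f)=T(S(f\chi_E))$ and verify the extension property via $S^2=S$. Your write-up is in fact more detailed than the paper's, which simply defines $\widehat T$ this way and asserts the conclusions; the explicit verification that $S(g)=g$ using surjectivity plus idempotence, and the inductive argument for $\widehat T^{\,k}(g)=T^k(g)$, are exactly the points the paper leaves implicit.
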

\begin{proof}
By Theorem \ref{t3_6}, there  exists $S\in DS^+(E, \cal A_E, \mu)$ such that
$S(\cal L^1(E, \cal A_E, \mu)) =\cal L^1(E, \cal B, \mu)$, $S(\cal L^\ii(E, \cal A_E, \mu)) = \cal L^\ii(E, \cal B, \mu)$, and $S^2=S$.

If we define $\widehat{T}(f) = T(S(f\chi_E))$, $f \in \cal L^1(\Om, \mathcal A, \mu) + \cal L^{\ii}(\Om, \mathcal A, \mu)$,
then it follows that  $\widehat{T} \in DS(\Om, \cal A, \mu)$,  and $\widehat{T}(g) = T(g)$ and $A_n(\widehat{T},g) =  A_n(T,g)$
for all $g \in \cal L^1(E, \cal B,\mu) + \cal L^{\ii}(E, \cal B,\mu)$ and $n$.
\end{proof}

\section{Maximality of the space $\mc R_\mu$}

\begin{teo}\label{t2}
Let $\nu$ be Lebesgue measure on the interval $(0,\ii)$. Then, given $f\in \left (\mc L^1(0,\ii)+\mc L^\ii(0,\ii)\right ) \sm \mc R_\nu$, there exists $T\in DS$ such that the averages (\ref{e1}) do not converge a.e., hence $\mc C_\nu=\mc R_\nu$.
\end{teo}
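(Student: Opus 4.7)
The plan is to exhibit, for each $f \in (\mc L^1+\mc L^\ii)(0,\ii) \sm \mc R_\nu$, a $T \in DS$ whose averages $A_n(T,f)$ diverge on a set of positive Lebesgue measure; together with Theorem \ref{t1} this yields $\mc C_\nu = \mc R_\nu$. Since $f \notin \mc R_\nu$, after replacing $f$ by $-f$ I may assume $\lb_+ := \lim_{t\to\ii}\mu_t(f^+) > 0$, so $\nu\{f>\lb\}=\ii$ for every $\lb<\lb_+$. Let $\al_0 := \sup\{\al:\nu\{f\le\al\}<\ii\}$; since $\nu\{f\le\al\}=\ii$ whenever $\al>\lb_+$, one has $\al_0\le\lb_+$, and the argument splits on whether this is strict.

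\textit{Case A: $\al_0<\lb_+$.} Fix $\al_0<\al<\bt<\lb_+$; since $f\in\mc L^1+\mc L^\ii$ forces $\lim_t\mu_t(f^-)<\ii$, I choose $\gamma$ sufficiently negative and $M>\lb_+$ so that both
\[ F:=\{\gamma\le f\le\al\}\qquad\text{and}\qquad E:=\{\bt<f\le M\} \]
have infinite Lebesgue measure. I then partition $(0,\ii)$ into measure-one blocks indexed by $\mathbb Z$, assigning the block at position $j\ge 0$ to $E$ or to $F$ according to a coloring $c:\mathbb N\to\{+,-\}$ whose cumulative density $N^\pm(n)/n$ oscillates between $0$ and $1$ along two lacunary subsequences (e.g.\ built from endpoints $n_k=2^{2^k}$), filling the negatively indexed positions arbitrarily from the remainder of $(0,\ii)$. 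The measure-preserving bijection $\sigma$ that advances one block gives a Koopman operator $T(g)=g\circ\sigma\in DS$, and for every $x$ in the base block $B\su F$ one has the pointwise sandwich
\[ \gamma\tfrac{N^-(n)}{n}+\bt\tfrac{N^+(n)}{n}\;\le\;A_n(T,f)(x)\;\le\;\al\tfrac{N^-(n)}{n}+M\tfrac{N^+(n)}{n}, \]
which forces $\liminf_n A_n(T,f)(x)\le\al<\bt\le\limsup_n A_n(T,f)(x)$ for every $x\in B$.

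\textit{Case B: $\al_0=\lb_+$.} Then $\nu\{|f-\lb_+|>\ep\}<\ii$ for every $\ep>0$, so $h:=f-\lb_+\mathbf 1\in\mc R_\nu$, and Theorem \ref{t1} gives $A_n(T,h)$ convergent a.e.\ for every $T\in DS$; it therefore suffices to produce one $T\in DS$ with $A_n(T,\mathbf 1)$ divergent a.e. For this, fix a measure-preserving isomorphism $\phi:(0,\ii)\to\mathbb Z\times(0,1)$, a rapidly growing sequence $b_1<b_2<\cdots$ with $b_{k+1}/b_k\to\ii$, and signs $\ep_n=-1$ on $\{b_i\}$ and $\ep_n=+1$ elsewhere, and take $T$ to be the pullback through $\phi$ of the signed shift $T^\sharp(g)(n,y)=\ep_n g(n+1,y)$. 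Since $|\ep_n|\equiv 1$, $T^\sharp$ is an $L^p$-isometry for every $p$, so $T\in DS$. A direct induction gives $T^k(\mathbf 1)(x)=(-1)^{\#\{i:b_i\in[m,m+k)\}}$, where $m$ is the integer coordinate of $\phi(x)$; by lacunarity this value equals $(-1)^j$ on the block $b_j-m<k\le b_{j+1}-m$, whose length grows like $b_{j+1}$ and dominates the partial sum, so $A_n(T,\mathbf 1)(x)\to(-1)^{j-1}$ along $n=b_j-m+1$ and hence oscillates between $\pm 1$ as $j\to\ii$.

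The substantive obstacle is Case B: for any \emph{positive} $T\in DS$ one has $T\mathbf 1\le\mathbf 1$, and positivity then gives $T^{k+1}\mathbf 1\le T^k\mathbf 1$ pointwise, so $(T^k\mathbf 1)$ is monotone decreasing and $A_n(T,\mathbf 1)$ automatically converges a.e. Hence no positive operator can defeat $f=\mathbf 1$, and constructing a genuinely signed DS operator with controlled oscillation on the constant function is the essential new content, beyond the straightforward Koopman construction of Case A.
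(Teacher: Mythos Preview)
Your argument is correct, but it is organized quite differently from the paper's proof. The paper treats all $f\notin\mc R_\nu$ with a single construction: having reduced to $f_+\notin\mc R_\nu$, it picks $0<a<b$ with $\nu\{a\le f_+\le b\}=\ii$, partitions this set $G$ into unit blocks $G_0,G_1,\dots$, and defines $T(g)(t)=\varphi(t)\,g(\tau(t))$ where $\tau$ is the one-sided block shift and $\varphi\in\{-1,0,1\}$ is a sign pattern constant on blocks; the block boundaries $n_1<n_2<\dots$ are then chosen inductively so that $A_{n_{2k-1}}(f_+)>a/2$ and $A_{n_{2k}}(f_+)<-a/2$ on $G_0$, while $T(f_-)=0$ because $\varphi$ vanishes off $G$. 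Thus the paper \emph{always} uses a signed weighted composition operator, whereas you split into your Case~A (a genuinely positive Koopman operator, exploiting that $f$ itself takes both ``low'' and ``high'' values on infinite-measure sets) and your Case~B (where $f-\lb_+\mathbf 1\in\mc R_\nu$, so a signed operator is unavoidable, and you build one via a lacunary sign flip on a $\mathbb Z$-indexed shift). Your dichotomy costs an extra case but buys a structural insight the paper does not isolate: positive $T\in DS$ can never witness divergence for $f=\mathbf 1$ (since $T^k\mathbf 1$ is monotone), so signed operators are genuinely required in Case~B, while in Case~A the obstruction is already visible to positive Koopman operators. One minor point to tidy in Case~A: for your $\mathbb Z$-indexed bijective shift you need the remainder $(0,\ii)\setminus\bigcup_{j\ge0}B_j$ to have infinite measure; this is not automatic, but is easily arranged by reserving an infinite-measure portion of $E$ (or of $F$) before carving out the blocks, or alternatively by using a one-sided shift with $T$ acting as the identity on the complement.
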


\begin{proof}
Let $f\in (\mc L^1+\mc L^\ii)\sm \mc R_\nu$. If $f=f_+-f_-$, then either
$f_+\in (\mc L^1+\mc L^\ii)\sm \mc R_\nu$ or $f_-\in (\mc L^1+\mc L^\ii)\sm \mc R_\nu$,
so let us assume the former.  Then we have
$$\lim_{t\to \ii}\mu_t(f_+)>0,
$$
which implies that there exist $0<a<b<\ii$ such that
$$
\nu\{a\leq \mu_t(f_+)\leq b\}=\ii.
$$
If we denote $G=\{a\leq f_+\leq b\}$, then $\mu(G)=\ii$.
Since $\mu$ is non-atomic, one can construct a sequence $\{G_m\}_{m=0}^\ii$ of pairwise disjoint subsets of $G$ such that
$$
G=\bigcup_{m=0}^\infty G_m \text{ \ \ and \ \ } \mu(G_m)=1 \text{ \ for every \ } m.
$$
Let $0=n_0,n_1, n_2 \dots$ be an increasing sequence of integers. Define the function $\varphi$ on $(0,\ii)$ by
$$
\varphi(t)=\sum_{k=0}^\ii\left(\chi_{\bigcup_{m=n_k}^{n_{k+1}-1}G_m}(t)-\chi_{G_{n_{k+1}}}(t)\right).
$$
Note that $\varphi(t)=0$ when $t\in (0,\ii)\sm G$.

Next, since $\mu(G_m)=\mu(G_{m+1})$,  there exists a measure preserving isomorphism $\tau_m: G_m\to G_{m+1}$,
$m=0,1,2,\dots$ (\cite[Ch.2, Proposition 7.4]{bs}). Therefore, we have a measure preserving isomorphism $\tau: G\to G$ such that $\tau(G_m)=G_{m+1}$ for all $m$. Expand $\tau$ to $(0,\ii)$ by letting $\tau(t)=t$ if $t\in (0,\ii)\sm G$.

Let us now define $T\in DS(0,\ii)$ as
$$
T(g)(t)=\varphi(t)g(\tau(t)), \ \ t\in (0,\ii), \ \ g\in \mc L^1+\mc L^\ii.
$$
Then it follows that
$$
A_n(f_+)(t)=\frac 1n\left ( f_+(t)+\sum_{k=1}^{n-1}\varphi(t)\varphi(\tau(t))\cdot \dotsc \cdot \varphi(\tau^{k-1}(t))
f_+(\tau^k(t))\right ).
$$
We have
$$
A_{n_1}(f_+)(t)=\frac 1{n_1}\sum_{k=0}^{n_1-1}f_+(\tau^k(t))\ge a>\frac a2 \text{ \ \ for every \ } t\in G_0.
$$
Further, since
$$
A_{n_2}(f_+)(t)=\frac 1{n_2}\left ( \sum_{k=0}^{n_1-1}f_+(\tau^k(t))-\sum_{k=n_1}^{n_2-1}f_+(\tau^k(t)) \right )
\leq \frac 1{n_2} ( bn_1-a(n_2-n_1)),
$$
there exists such $n_2$ that
$$
A_{n_2}(f_+)(t)< -\frac a2 \text{ \ \ for every \ } t\in G_0.
$$
As
$$
A_{n_3}(f_+)(t)=\frac 1{n_3}\left ( \sum_{k=0}^{n_1-1}f_+(\tau^k(t))-\sum_{k=n_1}^{n_2-1}f_+(\tau^k(t))+
\sum_{k=n_2}^{n_3-1}f_+(\tau^k(t)) \right ),
$$
one can find $n_3$ for which
$$
A_{n_3}(f_+)(t)> \frac a2, \ \ t\in G_0.
$$
Continuing this procedure, we choose $n_1<n_2<n_3<\dots$ to satisfy the inequalities
$$
A_{n_{2k-1}}(f_+)(t)>\frac a2 \text{ \ \ and \ \ } A_{n_{2k}}(f_+)(t)<-\frac a2, \ \ k=1,2,\dots,
$$
for every $t\in G_0$, implying that the sequence $\{ A_n(f_+)(t) \}$ diverges whenever $t\in G_0$.

Finally, note that $T(f_-)=0$, which implies that
$$
A_n(f)=A_n(f_+)+f_-,
$$
hence the sequence $\{ A_n(f)(t) \}$ does not converge almost everywhere on $(0,\ii)$.
\end{proof}

Now we shall extend Theorem \ref{t2} to the class of $\sigma-$finite non-atomic measure spaces:

\begin{teo}\label{t8}
If $(\Om, \cal A, \mu)$ is a non-atomic $\sigma-$finite measure space, then $\cal C_\mu=\cal R_\mu$.
\end{teo}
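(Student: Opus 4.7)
The plan is to reduce Theorem \ref{t8} to Theorem \ref{t2} by transporting the problem via a measure-preserving isomorphism between $(\Om,\mu)$ and $((0,\ii),\nu)$. Since $\cal R_\mu\su\cal C_\mu$ is already Theorem \ref{t1}, it suffices to show that for every $f\in(\cal L^1+\cal L^\ii)\sm\cal R_\mu$ there is some $T\in DS(\Om,\mu)$ for which $\{A_n(T,f)\}$ fails to converge a.e. The case $\mu(\Om)<\ii$ is vacuous (then $\cal L^1+\cal L^\ii=\cal L^1=\cal R_\mu$), so assume throughout that $\mu(\Om)=\ii$.

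Because $(\Om,\mu)$ is non-atomic and $\sigma-$finite of infinite mass, the standard measure-algebra classification produces a measure-preserving Boolean isomorphism $\varphi:\nabla_\mu\to\nabla_\nu$: partition $\Om$ into a countable family of pairwise disjoint sets of unit measure (possible by non-atomicity combined with $\sigma-$finiteness), partition $(0,\ii)$ into the intervals $(n-1,n]$, and splice together measure isomorphisms between the individual non-atomic probability algebras on corresponding pieces. Theorem \ref{t5} then lifts $\varphi$ to a unique isomorphism $\Phi:\cal L^0(\Om)\to\cal L^0(0,\ii)$ that is a bijective isometry on each of $\cal L^1$ and $\cal L^\ii$ and that preserves a.e.\ convergence in both directions.

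Set $\wt f=\Phi(f)\in\cal L^1(0,\ii)+\cal L^\ii(0,\ii)$. Since $\Phi$ preserves level-set measures, $\nu\{|\wt f|>\lb\}=\mu\{|f|>\lb\}$ for every $\lb>0$; therefore the non-increasing rearrangements agree, $\mu_t(\wt f)=\mu_t(f)$ for all $t>0$, and in particular $\wt f\notin\cal R_\nu$. Theorem \ref{t2} then supplies some $T_0\in DS(0,\ii)$ for which $\{A_n(T_0,\wt f)\}$ diverges a.e. Put $T=\Phi^{-1}\circ T_0\circ\Phi$; conjugation by the isometries furnished by Theorem \ref{t5}(2) shows $T\in DS(\Om,\mu)$, and linearity of $\Phi^{-1}$ yields $A_n(T,f)=\Phi^{-1}(A_n(T_0,\wt f))$ for every $n$. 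If $\{A_n(T,f)\}$ converged a.e.\ on $\Om$, applying $\Phi$ and invoking property (3) of Theorem \ref{t5} would force a.e.\ convergence of $\{A_n(T_0,\wt f)\}$ on $(0,\ii)$, contradicting the choice of $T_0$. Hence $f\notin\cal C_\mu$, establishing $\cal C_\mu\su\cal R_\mu$.

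The only genuinely nontrivial ingredient is the existence of the measure-preserving Boolean isomorphism $\varphi$, that is, the fact that any non-atomic $\sigma-$finite measure space of infinite total mass is measure-algebra isomorphic to $((0,\ii),\nu)$. Once that is in hand, the remainder is a routine transfer of the one-dimensional counterexample of Theorem \ref{t2} along the bijection $\Phi$ via the dictionary provided by Theorem \ref{t5}.
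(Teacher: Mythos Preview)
Your argument has a genuine gap at exactly the point you flag as ``the only genuinely nontrivial ingredient'': the existence of a measure-preserving Boolean isomorphism $\varphi:\nabla_\mu\to\nabla_\nu$. This claim is false without a separability (countable Maharam type) hypothesis on $(\Om,\cal A,\mu)$, which the theorem does not assume. Your sketch partitions $\Om$ into pieces of unit measure and asserts that each such non-atomic probability algebra is isomorphic to the Lebesgue algebra on $(0,1]$; but by Maharam's classification, a non-atomic probability measure algebra is isomorphic to the Lebesgue algebra on $[0,1]$ only when it is separable. For instance, $\{0,1\}^{\aleph_1}\times\mathbb N$ with the product of coin-flipping measure and counting measure is non-atomic, $\sigma$-finite, of infinite mass, yet its measure algebra is not isomorphic to that of $((0,\ii),\nu)$. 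So the global transfer you propose simply cannot be carried out in the stated generality.

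The paper circumvents this by never seeking a global isomorphism. Instead it normalizes so that $\lim_{t\to\ii}\mu_t(f)=1$, passes to $g=f\chi_E-\chi_E$ on $E=\{f>1\}$ so that $g\in\cal R_\mu$, and invokes Theorem \ref{t7} to obtain a measure-preserving map $\sigma:E\to(0,\ii)$ with $g=\mu_t(g)\circ\sigma$. This $\sigma$ does not identify the full measure algebras; it only identifies the \emph{pullback} sub-$\sigma$-algebra $\cal A_\sigma=\{\sigma^{-1}(B):B\in\cal B\}$ with the Lebesgue algebra, and that identification is all Theorem \ref{t5} needs. The counterexample operator from Theorem \ref{t2} is transported to $DS(E,\cal A_\sigma,\mu)$ by conjugation, and then Corollary \ref{t3_7} (conditional expectation onto $\cal A_\sigma$ followed by extension by zero off $E$) lifts it to a genuine $\widetilde T\in DS(\Om,\cal A,\mu)$. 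The essential difference from your approach is that the paper works relative to a separable sub-$\sigma$-algebra tailored to $f$, rather than attempting to globally identify $(\Om,\mu)$ with $((0,\ii),\nu)$. Your argument is correct as written only under an added hypothesis that $(\Om,\cal A,\mu)$ has countably generated measure algebra.
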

\begin{proof}
Take  $f \in (\cal L^1(\Om) + \cal L^{\ii}(\Om))\sm \mc R_\mu$. We need to show that $f\notin \cal C_\mu$.
Without loss of generality, $f\ge 0$ (see the proof of Theorem \ref{t2}).
Assume that $\displaystyle \lim_{t \to \ii}\mu_t(f) =1$:
if $\displaystyle \lim_{t \to \ii}\mu_t(f) =\alpha >0$, then we can take $\frac{f}{\alpha}$ instead of $f$. Then we have
\begin{equation}\label{e3}
\mu\{f \geq 1\} = \nu\{\mu_t(f) \geq 1\}= \ii.
\end{equation}

Assume first that $\mu(E) = \infty$, where $E= \{f > 1\}$. In this case the functions $f$ and $h = f\chi_E$
are equimeasurable, so $\mu_t(f)=\mu_t(h)$. If $g=h-\chi_E$, then $\supp(g)=E$ and, given $t>0$,
$$
\mu_t(g) = \inf \{\lb>0: \ \mu\{g > \lb\} \leq t\}=\inf \{\lb>0: \ \mu\{h >1+ \lb\} \leq t\}=
$$
$$
=\inf \{\gamma -1>0: \ \mu\{h >\gamma\} \leq t\}= \inf \{\gamma>0: \ \mu\{h >\gamma\} \leq t\}-1=\mu_t(h)-1,
$$
hence $\displaystyle \lim_{t \rightarrow \infty}\mu_t(g) =0$.

By Theorem \ref{t7}, there exists a surjective m.p.t.
$$
\sigma:\supp(g)=E\longrightarrow \supp(\mu_t(g)) = (0,\ii)
$$
such that $g =  \mu_t(g) \circ \sigma$ on $E$.
Then we have
$$
h = g+\mathbf 1 =  (\mu_t(g)+1)\circ \sigma \text{ \ on \ } E.
$$
Since $\mu_t(g)+1  \in (\mc L^1(0,\ii)+\mc L^\ii(0,\ii)) \sm \mc R_\nu$, Theorem \ref{t2} entails
that there is $T\in DS(0,\ii)$ such that  the sequence
\begin{equation}\label{e4}
 \frac 1 n \sum_{k=0}^{n-1}T^k(\mu_t(g) + 1) \text{\ \ does not converge a.e. on\ \ } (0,\ii).
\end{equation}

If $\cal B$ is the $\sigma-$algebra of Lebesgue measurable sets in $(0,\ii)$, let us denote
$$
\mathcal A_\sigma= \{\sigma^{-1}(B): B \in \mathcal B\}, \ \ \nabla_\sigma = \{[G]: G \in \mathcal A_\sigma\}
$$
and let $\varphi([\sigma^{-1}(B)]) = [B]$, $B\in \cal B$. It is clear that $\varphi: \nabla_\sigma\to\nabla_\nu(0, \ii)$
is an  isomorphism of Boolean algebras and $\widehat{\nu}(\varphi(p)) = \widehat{\mu}(p)$ for all $p \in \nabla_\sigma$.
Besides, if  $B\in \cal B$, $e= \chi_B$, and
$e\circ \sigma = \chi_{\sigma^{-1}(B)}$, then $\chi_B\circ \sigma = \chi_{\sigma^{-1}(B)}$,  hence
$$
\varphi(e\circ \sigma) = \varphi(\chi_B\circ \sigma) = \varphi([\sigma^{-1}(B)])=[B] =e \text{ \ \ for all \ } e \in \nabla_\nu(0,\infty).
$$

By Theorem \ref{t5}, there exists an  isomorphism $\Phi: \cal L^0(E,\cal A_\sigma,\mu)\rightarrow \cal  L^0(0,\ii)$ such that
$\Phi(e)=\varphi(e)$ for all $e \in \nabla_\sigma$. Show that
\begin{equation}\label{e6}
\Phi(u \circ \sigma)=u \text{ \ \ for each \ } u \in   \cal  L^0(0,\ii).
\end{equation}
If $u=\sum\limits_{i=1}^k \lambda_i e_i \in \mathbb R(\nabla_\nu(0,\infty))$, where $\lb_i \in \mathbb R$ and
$e_i \in \nabla_\nu(0,\infty)$ with $e_i e_j = 0$ for $i \neq j$, then
$$
\Phi(u \circ \sigma)=\Phi\left (\left (\sum\limits_{i=1}^k \lambda_i e_i\right ) \circ \sigma\right ) =
\sum\limits_{i=1}^k \lambda_i \Phi(e_i \circ \sigma)= \sum\limits_{i=1}^k \lambda_i \varphi(e_i \circ \sigma)
= \sum\limits_{i=1}^k \lambda_i e_i =u.
$$
Let now $u \in   \cal  L_+^0(0,\ii)$. Then there exists a sequence $0\leq u_n \in \mathbb R(\nabla_\nu(0,\infty))$ such that $u_n \uparrow u$, hence $(u_n \circ \sigma) \uparrow (u \circ \sigma)$.  Since $\Phi$ is an isomorphism, it follows that
$\Phi(u_n \circ \sigma) \uparrow \Phi(u \circ \sigma)$. Therefore
$$
 \Phi(u \circ \sigma) = \sup_n \Phi(u_n \circ \sigma) = \sup_n u_n =u.
$$
If $u \in  \cal L^0(0,\ii)$, then $u=u_{+}-u_{-}$ with $u_+, u_- \in  \cal  L_+^0(0,\ii)$, and (\ref{e6}) follows.

Let $\widehat{T} = \Phi^{-1}T\Phi$. In view of Theorem \ref{t5}\,(2), $\widehat{T}\in DS(E,\cal A_\sigma, \mu)$.
Note that, since $\sigma$ is a m.p.t. and $g =  \mu_t(g) \circ \sigma$, it follows that $f, g, h \in \cal L^0(E, \cal A_\sigma, \mu)$.
Then, utilizing (\ref{e6}), we obtain
$$
\frac 1 n \sum_{k=0}^{n-1}\widehat{T}^k(h) = \frac 1 n \sum_{k=0}^{n-1}(\Phi^{-1}T\Phi)^k (h)=\Phi^{-1}\frac 1 n \sum_{k=0}^{n-1}T^k(\Phi(h))=
$$
$$
=\Phi^{-1}\frac 1 n \sum_{k=0}^{n-1}T^k(\mu_t(g) +1 ).
$$
Combining now (\ref{e4}) with  Theorem \ref{t5}\,(3), we conclude that the averages $A_n(\widehat T,f)$
do not converge a.e. on  $E$.

By Corollary \ref{t3_7}, there exists $\widetilde T\in DS(\Om, \cal A, \mu)$
such that $\widetilde T(u) = \widehat{T}(u)$ and $A_n(\widetilde T,u) =  A_n(\widehat{T},u)$
for all $u \in \cal L^0(E,\cal A_\sigma, \mu)$.
Therefore, $A_n(\widetilde T,f)$ do not converge a.e. on  $E$, hence $f\notin \cal C_\mu$.

If $\mu(E) < \infty$, let $E_1 = \{f = 1\}$. Then, in view of (\ref{e3}), we have
$$
\mu(E_1) = \nu\{\mu_t(f) = 1\} = \infty.
$$
In particular, if $g_1= f\cdot \chi_{E_1} -\chi_{E_1} = 0$,  Theorem \ref{t7} implies that
there exists a surjective m.p.t. $\sigma_1:\Om =\supp(g_1)\to(0,\ii)$.
Repeating the argument as in the case $\mu(E) = \infty$, we complete the proof.
\end{proof}

Next we need a version of Theorem \ref{t2} for a totally atomic infinite measure space
with the atoms of equal measures.
If  $(\Om, \mu)$ is  such a measure space, then $\Om=\{\omega_n\}_{n=1}^\ii$,
where $\omega_n$ is an atom with $0<\mu(\omega_n) = \mu(\omega_{n+1}) <\ii$ for all $n$.
In this case, the algebra $\cal L^0(\Om)$ is isomorphic to the algebra of sequences of real numbers,
the  algebra $\cal L^{\infty}(\Om)$ is isomorphic to the algebra
$$
l^\ii=\left \{f=\{\alpha_n\}_{n=1}^\ii:\ \alpha_n \in\mathbb{R},\ \|f \|_\ii
=\sup_n|\al_n|<\ii\right \},
$$
and  the algebra $\cal L^1(\Om)$ is isomorphic to the  algebra
$$
l^1=\left \{f=\{\alpha_n\}_{n=1}^\ii:\ \alpha_n \in\mathbb{R},\ \|f \|_1
=\sum_{n=1}^\ii |\al_n|<\ii\right \}.
$$
As $l^1\su l^\ii$, we have $l^1+ l^\ii = l^\ii$.  Note also that if
$f_k =\{\alpha^{(k)}_n\}_{n=1}^{\infty}\in l^\ii$ and $f=\{\alpha_n\}_{n=1}^{\infty} \in l^\ii$, then
$$
f_k \rightarrow f \ \  \mu-a.e. \ \Longleftrightarrow \ \alpha_n^{(k)} \rightarrow \alpha_n \text{ \ as \ } k\to \ii
\text{ \ for every \ }n.
$$

\begin{teo}\label{t9}
If  $(\Om,\mu)$ is a totally atomic infinite measure space with the atoms of equal measures, then $\cal C_\mu=\cal R_\mu$.
\end{teo}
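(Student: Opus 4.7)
The plan is to imitate the proof of Theorem \ref{t2} almost verbatim, with individual atoms of $\Om$ playing the role of the unit-measure pieces $G_m\su(0,\ii)$ used there. The only feature of the non-atomic argument that is truly needed is a partition of the ``bulk'' set $G$ into countably many pairwise disjoint pieces of common finite measure, and the equal-atomic hypothesis supplies exactly this.

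First I would reduce to $0\le f\in(\mc L^1+\mc L^\ii)\sm\mc R_\mu$ just as in Theorem \ref{t2}. Since $\lim_{t\to\ii}\mu_t(f)>0$, there exist $0<a<b<\ii$ with $\mu(G)=\ii$, where $G=\{a\le f\le b\}$. Because $\Om$ is totally atomic with every atom of common measure $c>0$, $G$ is a countable disjoint union of atoms; enumerate these as $G=\bigsqcup_{m=0}^\ii G_m$, each $G_m$ a single atom of measure $c$. With an increasing sequence $0=n_0<n_1<n_2<\cdots$ still to be chosen, define $\varphi$ to equal $-1$ on the atoms $G_{n_k-1}$ ($k\ge 1$), $+1$ on every other $G_m$, and $0$ off $G$; let $\tau:\Om\to\Om$ be the atomic shift sending $G_m$ to $G_{m+1}$, extended by the identity off $G$; and set
$$
T(g)(\om)=\varphi(\om)\,g(\tau(\om)),\qquad g\in\mc L^1+\mc L^\ii.
$$

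Since $|\varphi|\le 1$, $\|T(g)\|_\ii\le\|g\|_\ii$ is immediate; meanwhile injectivity of $\tau$ together with the atomic equimeasurability yields
$$
\|T(g)\|_1=\int_G |\varphi(\om)|\,|g(\tau\om)|\,d\mu(\om)\le\int_G |g\circ\tau|\,d\mu=\int_{\tau(G)}|g|\,d\mu\le\|g\|_1,
$$
so $T\in DS$. For $\om\in G_0$ one has $\tau^k(\om)\in G_k$ and $a\le f(\tau^k\om)\le b$ for all $k\ge 0$, while the accumulated product $\varphi(\om)\varphi(\tau\om)\cdots\varphi(\tau^{k-1}\om)$ equals $+1$ for $k\in[n_{2j},n_{2j+1})$ and $-1$ for $k\in[n_{2j+1},n_{2j+2})$. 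The routine inductive choice of $n_1<n_2<\cdots$ from Theorem \ref{t2}---each new block long enough to dominate all earlier ones---then delivers
$$
A_{n_{2k-1}}(T,f)(\om)>\tfrac{a}{2},\qquad A_{n_{2k}}(T,f)(\om)<-\tfrac{a}{2}
$$
for every $\om\in G_0$, so $\{A_n(T,f)\}$ diverges on the positive-measure set $G_0$. The passage from $f$ to $f_+$ is handled exactly as at the end of Theorem \ref{t2}: $f_-$ vanishes on $G$ and $\varphi$ vanishes off $G$, so $T(f_-)=0$ and the divergence transfers to $f$.

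No substantive obstacle is anticipated; the construction of Theorem \ref{t2} uses only equimeasurability of the pieces $\{G_m\}$, which is guaranteed here by the equal-atom assumption. The one cosmetic change---common atomic measure $c$ rather than $1$---is absorbed directly into the Dunford-Schwartz estimate above (or, equivalently, handled by first rescaling $\mu$).
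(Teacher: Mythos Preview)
Your proposal is correct and follows essentially the same approach as the paper's own proof: both build a weighted atomic shift $T(g)=\varphi\cdot(g\circ\tau)$ on an infinite set $G=\{a\le f\le b\}$ of equal-measure atoms, with a $\pm 1$ sign pattern on $\varphi$ chosen (via an inductive choice of $n_1<n_2<\cdots$) so that the ergodic averages at the initial atom oscillate across $\pm a/2$. The only cosmetic differences are that the paper first normalizes to $\lim_{t\to\infty}\mu_t(f)=1$ and writes everything in sequence notation $l^\infty$, whereas you keep the general bounds $a,b$ and the abstract atomic picture.
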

\begin{proof}
We need to show that if  $f =\{\alpha_n\}_{n=1}^{\infty} \in l^\ii \sm \mc R_\mu$, then $f\notin \cal C_\mu$.
As in the proof of Theorem \ref{t8}, we may assume that
$\alpha_n \geq 0$ for every $n$, and $\displaystyle \lim_{t \to \ii}\mu_t(f) =1$. Identifying $\Om=\{\omega_n\}_{n=1}^\ii$
with $\mathbb N$, it is clear that
$$
\mu\{n: \alpha_n \geq 1\} = \nu\{\mu_t(f) \geq 1\} = \ii.
$$
Besides, there exists $\alpha > 1$ such that the set
$$
G = \{m \in \Om : 1 \leq \alpha_m \leq \alpha\}= \{m_1 < m_2<\dotsc < m_s<\dots \}
$$
is infinite.

Let $1=n_0,n_1, n_2 \dots$ be an increasing sequence of integers. Define the function $\varphi:\Om \to \mathbb R$  
$(E, \mathcal A_E, \mu)$ by
$$
\varphi(m)=\left(\chi_{\{m_1, m_{2}, ..., m_{n_{1}}-1\}}(m)-
\chi_{\{m_{n_{1}}\}}(m)\right) +
$$
$$
+\sum_{k=1}^\ii\left(\chi_{\{m_{n_k},m_{n_k+1}, ...,m_{n_{k+1}}-1\}}(m)-
\chi_{\{m_{n_{k+1}}\}}(m)\right) \text{ \ if \ } m\in G;
$$
$$
\varphi(m)=0 \text{ \ if \ } m\notin G.
$$
Let $\tau: \Om \to \Om$ be defined as
$$
\tau(m_i) = m_{i+1} \text{ \ if\ } m_i \in G \text{ \ and \ }\tau(m) = m \text{ \ if \ } m \notin G.
$$
Finally, define  the linear operator $T:l^\ii\rightarrow l^\ii$ by
$$
T(g)(n)=\varphi(n)g(\tau(n)), \ \ g \in l^\ii.
$$
It is clear that $T(l^1) \subset l^1$, $\|T\|_{l^1 \rightarrow l^1} \leq 1$, and $\|T\|_{l^\ii\rightarrow l^\ii} \leq 1$,
that is, $T\in DS(\Om)$.

We have
$$
A_{n_1}(T,f)(m_1)=A_{n_1}(f)(m_1)=\frac 1{n_1}\sum_{k=0}^{n_1-1}f(\tau^k(m_1))\ge 1.
$$
Further, since
$$
A_{n_2}(f)(m_1)=\frac 1{n_2}\left ( \sum_{k=0}^{n_1-1}f(\tau^k(m_1))-\sum_{k=n_1}^{n_2-1}f(\tau^k(m_1)) \right )
\leq \frac 1{n_2} ( \alpha n_1-(n_2-n_1)),
$$
there exists such $n_2$ that
$$
A_{n_2}(f)(m_1)< -\frac 12.
$$
Repeating remaining steps of the proof of Theorem \ref{t2}, we see that the sequence $\{ A_n(T,f)(m_1)\}$
does not converge, that is, $f\notin \cal C_\mu$.
\end{proof}

Finally, we establish Theorem \ref{t2} for  a quasi-non-atomic measure space:

\begin{teo}\label{t10}  If an infinite measure space $(\Om, \cal A, \mu)$  is quasi-non-atomic,
then $\cal C_\mu=\cal R_\mu$.
\end{teo}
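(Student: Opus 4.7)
The plan is to prove the inclusion $\mc C_\mu \subseteq \mc R_\mu$, since $\mc R_\mu \subseteq \mc C_\mu$ is already delivered by Theorem~\ref{t1}. Given $f \in (\mc L^1 + \mc L^\ii) \setminus \mc R_\mu$, I would construct a single Dunford-Schwartz operator $T$ so that $\{A_n(T, f)\}$ fails to converge a.e. Exactly as in the proofs of Theorems~\ref{t2} and~\ref{t8}, I would first reduce to the case $f \geq 0$ by passing to whichever of $f_\pm$ lies outside $\mc R_\mu$; the $T$ constructed below will be supported (through its defining multiplier) on a subset of $\{f_+ > 0\}$, hence annihilates $f_-$, so that divergence of $A_n(T, f_+)$ forces divergence of $A_n(T, f)$.

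Next I would decompose $\Om = \Om_c \cup \Om_a$ into its non-atomic part $\Om_c$ and totally atomic part $\Om_a$, and split $f = f\chi_{\Om_c} + f\chi_{\Om_a}$. Since there exists $\lb_0 > 0$ with $\mu\{f > \lb_0\} = \ii$, at least one of these restrictions fails to be in the corresponding $\mc R$-space. If it is $f\chi_{\Om_c}$, then $\mu(\Om_c) = \ii$ and $(\Om_c, \mu)$ is non-atomic $\sigma$-finite, so Theorem~\ref{t8} yields $T_0 \in DS(\Om_c)$ whose averages on $f\chi_{\Om_c}$ diverge on a subset of positive measure. Otherwise $\mu(\Om_a) = \ii$, so $\Om_a$ has infinitely many atoms; the hypothesis of quasi-non-atomicity then forces these atoms to have equal measure, placing us in the setting of Theorem~\ref{t9}, which delivers a corresponding $T_0 \in DS(\Om_a)$.

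To finish, I would lift $T_0$ on $E \in \{\Om_c, \Om_a\}$ to the whole of $\Om$ via Corollary~\ref{t3_7}, obtaining $\widetilde T \in DS(\Om)$ of the form $\widetilde T(u) = T_0(S(u\chi_E))$. Because $\widetilde T$ annihilates any function supported off $E$, an easy induction gives $\widetilde T^k(f) = T_0^k(f\chi_E)$ for $k \geq 1$; consequently $A_n(\widetilde T, f) = A_n(T_0, f\chi_E)$ on $E$, whereas off $E$ one has $A_n(\widetilde T, f) = f/n \to 0$ a.e. Divergence of $A_n(T_0, f\chi_E)$ on a positive-measure subset of $E$ therefore transfers to $A_n(\widetilde T, f)$, showing $f \notin \mc C_\mu$.

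The main obstacle is already absorbed into the preceding auxiliary results: Theorems~\ref{t8} and~\ref{t9} supply the divergence in the two model cases, while Corollary~\ref{t3_7} provides the extension mechanism together with the crucial identity $A_n(\widetilde T, g) = A_n(T_0, g)$ on $E$. What remains here is essentially bookkeeping: the case analysis identifying which component $\Om_c$ or $\Om_a$ carries the obstruction $f \notin \mc R_\mu$, together with the check that the extension $\widetilde T$ genuinely inherits divergent averages on the relevant piece while being negligible on its complement.
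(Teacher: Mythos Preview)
Your proposal is correct and follows essentially the same route as the paper: split $\Om$ into its non-atomic and atomic parts, observe that $f$ restricted to one of them still lies outside the corresponding $\mc R$-space, invoke Theorem~\ref{t8} or Theorem~\ref{t9} there, and lift the resulting operator to all of $\Om$ via Corollary~\ref{t3_7}. One small remark: your preliminary reduction to $f\ge 0$ is unnecessary (Theorems~\ref{t8} and~\ref{t9} are already stated for signed $f$), and your justification that the extended $T$ annihilates $f_-$ appeals to the internal multiplier structure of those constructions rather than to their statements---it is simpler, as the paper does, to apply Theorems~\ref{t8}/\ref{t9} directly to $f\chi_E$ and note that $A_n(\widetilde T,f)-A_n(T_0,f\chi_E)=\frac1n f\chi_{\Om\setminus E}\to 0$ a.e.
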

\begin{proof}
 We need to show that if $f\in (\cal L^1+\cal L^\ii)\sm \cal R_\mu$, then
$f\notin \cal C_\mu$.
Consider the complete Boolean algebra $\nabla_\mu = \{[E]: E \in \mathcal A\}$.
There exists $e \in \nabla_\mu$ such that $e \nabla_\mu$ is a non-atomic and
$(\mathbf 1-e) \nabla_\mu$ is a totally atomic Boolean algebra.
Since  $\displaystyle \lim_{t \rightarrow \infty}\mu_t(f) >0$ and $f = ef + (\mathbf 1-e)f$,
it follows from the inequality
$$
\mu_{t+s}(ef + (\mathbf 1-e)f) \leq \mu_t(ef) + \mu_s((\mathbf 1-e)f)
$$
(see \cite[Ch.2, Proposition 1.3]{bs}) that
$$
\lim_{t \rightarrow \infty}\mu_t(ef) > 0 \text{ \ \ or \ } \lim_{t \rightarrow \infty}\mu_t((\mathbf 1-e)f) > 0.
$$

Assume that the number of atoms in $(\Om,\mu)$ is finite.
Then $\displaystyle \lim_{t \rightarrow \infty}\mu_t((\mathbf 1-e)f)=0$, implying that
$\displaystyle \lim_{t \rightarrow \infty}\mu_t(ef) > 0$, hence
\begin{equation}\label{e5}
ef \in \left (\cal L^1(e \nabla_\mu) +
\cal L^{\ii}(e \nabla_\mu)\right )\sm \mc R_\mu.
\end{equation}
By Theorem \ref{t8}, there exists $T\in DS(e\nabla_\mu)$ such that the averages $A_n(T,ef)$ do not converge a.e.

Besides, Corollary \ref{t3_7} implies that there is $\widehat T\in DS(\Om,\mu)$ such that
$$
\widehat T(h)=T(h) \text{ \ \ and \ \ } A_n(\widehat T,h)=A_n(T,h), \ \ h\in \cal L^1(e\nabla_\mu)+\cal L^\ii(e\nabla_\mu),
\ n\in \mathbb N.
$$
Next, defining the operator $\widetilde T\in DS(\Om,\mu)$ by
$$
\widetilde T(g) = \widehat T(eg), \ \ g \in \cal L^1(\Om,\mu) + \cal L^{\ii}(\Om,\mu),
$$
we obtain
$$
A_n(\widetilde T,f) = A_n(\widehat T, ef)=A_n(T,ef),\ \ n\in \mathbb N,
$$
hence $f\notin \cal C_\mu$.

Let now $(\Om,\mu)$ have infinitely many items of the same measure.
Then there exists
$e \in \nabla_\mu$ such that either $ e\nabla_\mu  \neq \{0\}$ is a non-atomic Boolean algebra and (\ref{e5}) holds or
$(\mathbf 1-e) \nabla_\mu\neq \{0\}$ is a totally atomic Boolean algebra with the atoms of equal measures and, with
$e^\perp=\mathbf 1-e$,
$$
e^\perp f \in \left (\cal L^1(e^\perp \nabla_\mu) + \cal L^{\ii}(e^\perp \nabla_\mu) \right )\sm \mc R_\mu.
$$

In the first case,  we find $T$ and define $\widetilde T\in DS(\Om,\mu)$ as above and conclude that the averages
$A_n(\widetilde T, f)$ do not converge $\mu-$a.e., that is, $f\notin \cal C_\mu$.

In the second case, we use Theorem \ref{t9} and repeat the preceding argument.
\end{proof}

\section{Applications}
Combining Proposition \ref{p1} with Theorems \ref{t1} and \ref{t10}, we arrive at the following
characterization of the fully symmetric spaces possessing individual ergodic theorem property.
\begin{teo}\label{t210}
Let  an infinite measure space $(\Om,\mu)$   be quasi-non-atomic.
Given a fully symmetric space $E \subset \cal L^1(\Om)+\cal L^\ii(\Om)$, the following conditions are equivalent:
\begin{enumerate}
\item $E\in IET(\Om)$;
\item $E\su \cal R_\mu$;
\item $\mathbf 1\notin E$.
\end{enumerate}
\end{teo}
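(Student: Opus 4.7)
The plan is to assemble the theorem as a triangle of implications $(3)\Rightarrow(1)\Rightarrow(2)\Rightarrow(3)$, relying entirely on results already proved in the paper; no new construction is needed. The quasi-non-atomic hypothesis enters only through the implication $(1)\Rightarrow(2)$, and the infinite-measure hypothesis $\mu(\Om)=\ii$ is what makes Proposition \ref{p1} applicable.

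First I would observe that the equivalence $(2)\Leftrightarrow(3)$ is Proposition \ref{p1} verbatim, applied to the symmetric (in fact fully symmetric) space $E$; since $\mu(\Om)=\ii$, $E\su\mc R_\mu$ is equivalent to $\mathbf 1\notin E$. This settles two of the three arrows for free.

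For $(3)\Rightarrow(1)$ I would cite Theorem \ref{t1}: a fully symmetric space $E\su \mc L^1+\mc L^\ii$ with $\mathbf 1\notin E$ automatically lies in IET$(\Om)$, meaning that for every $f\in E$ and every $T\in DS$ the averages $A_n(T,f)$ converge a.e. to some $\widehat f\in E$. The hypotheses of Theorem \ref{t1} are exactly what (3) together with full symmetry provides.

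The remaining arrow $(1)\Rightarrow(2)$ is where the main theorem of this paper gets used. Suppose $E\in$ IET$(\Om)$; then by the very definition of the IET property, for each $f\in E$ and each $T\in DS$ the sequence $\{A_n(T,f)\}$ converges a.e., that is, $f\in\mc C_\mu$. Hence $E\su\mc C_\mu$. Because $(\Om,\mu)$ is quasi-non-atomic, Theorem \ref{t10} identifies $\mc C_\mu=\mc R_\mu$, so $E\su\mc R_\mu$, establishing (2). There is no substantive obstacle here since the bulk of the work was already done in Theorems \ref{t1} and \ref{t10}; the only thing to keep an eye on is that the quasi-non-atomic hypothesis is invoked at precisely this step and nowhere else.
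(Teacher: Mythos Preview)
Your proposal is correct and is precisely the argument the paper has in mind: it states the theorem as an immediate consequence of ``Combining Proposition \ref{p1} with Theorems \ref{t1} and \ref{t10}'', and you have spelled out exactly this combination, with the implications routed through $(2)\Leftrightarrow(3)$ via Proposition \ref{p1}, $(3)\Rightarrow(1)$ via Theorem \ref{t1}, and $(1)\Rightarrow(2)$ via Theorem \ref{t10}.
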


To outline the scope of applications of Theorem \ref{t210}, we present some examples of
fully symmetric spaces $E$ such that $\mathbf 1\notin E$ or $\mathbf 1\in E$.

1. Let $\Phi$ be an {\it Orlicz function}, that is, $\Phi:[0,\ii)\to [0,\ii)$ is left-continuous, convex,  increasing function such that $\Phi(0)=0$ and $\Phi(u)>0$ for some $u\ne 0$ (see, for example \cite[Chapter 2, \S 2.1]{es}).  Let
$$
\cal L^\Phi=\cal L^\Phi(\Om,\mu)=\left \{ f \in \cal L^0(\Om,\mu): \  \int_{\Om} \left (\Phi\left (\frac {|f|}a \right )\right ) d \mu
<\ii \text { \ for some \ } a>0 \right \}
$$
be the corresponding {\it Orlicz space},  and let
$$
\| f\|_\Phi=\inf \left \{ a>0:  \int_{\Om} \left (\Phi\left (\frac {|f|}a \right )\right ) d \mu \leq 1 \right \}
$$
be the {\it Luxemburg norm} in $\cal L^\Phi$. \ It is well-known that  $(\cal L^\Phi, \| \cdot\|_\Phi)$
is a fully symmetric space.

Since  $\mu(\Om) = \infty$, if $\Phi(u)>0$ for all $u\ne 0$, then
$\int_{\Om} \left (\Phi\left (\frac {\mathbf 1}a \right )\right ) d \mu = \ii$ for each
$a>0$, hence $\mathbf 1  \notin   \cal L^\Phi$. If  $\Phi(u)=0$ for all $0\leq u< u_0$, then  $\mathbf 1 \in \cal  L^\Phi$.
Therefore, Theorem \ref{t210} implies the following.

\begin{teo}\label{t211}
Let  $(\Om,\mu)$ be an infinite measure space, and let  $\Phi$ be an Orlicz function.
If $\Phi(u)>0$ for all $u\ne 0$, then $(\cal L^\Phi, \|\cdot\|_\Phi) \in IET(\Om)$. If $(\Om,\mu)$
is  quasi-non-atomic and $(\cal L^\Phi, \|\cdot\|_\Phi) \in IET(\Om)$,
then $\Phi(u)>0$ for all $u\ne 0$.
\end{teo}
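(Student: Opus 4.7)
The strategy is to invoke Theorem \ref{t210} with $E = \cal L^\Phi$, so that both implications reduce to determining whether $\mathbf{1} \in \cal L^\Phi$, and then to do the short Luxemburg-integral computation already sketched in the paragraph preceding the theorem statement.

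For the first direction, I would show that the hypothesis $\Phi(u) > 0$ for all $u \neq 0$ forces $\mathbf{1} \notin \cal L^\Phi$. Since $\mathbf{1}/a$ is the constant function $1/a$, one has $\int_\Om \Phi(\mathbf{1}/a)\,d\mu = \Phi(1/a)\,\mu(\Om) = \infty$ for every $a > 0$, so no rescaling makes the integral finite. This direction in fact does not require quasi-non-atomicity: Proposition \ref{p1} gives $\cal L^\Phi \subset \cal R_\mu$, after which Theorem \ref{t1} immediately yields $\cal L^\Phi \in IET(\Om)$ for an arbitrary infinite measure space.

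For the converse I would argue the contrapositive. Suppose $\Phi(u_1) = 0$ for some $u_1 > 0$. Convexity of $\Phi$ together with $\Phi(0) = 0$ forces $\Phi(tu_1) \leq t\,\Phi(u_1) = 0$ for $t \in [0,1]$, so $\Phi$ vanishes on the whole interval $[0,u_1]$. Choosing any $a > 1/u_1$ then makes $\Phi(\mathbf{1}/a) \equiv 0$, whence $\int_\Om \Phi(\mathbf{1}/a)\,d\mu = 0 \leq 1$ and therefore $\mathbf{1} \in \cal L^\Phi$. Now the quasi-non-atomic hypothesis enters: the equivalence (1) $\Leftrightarrow$ (3) of Theorem \ref{t210} delivers $\cal L^\Phi \notin IET(\Om)$, contradicting the assumption.

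There is no serious obstacle here, since the theorem is essentially a direct corollary of Theorem \ref{t210}. The only point worth emphasizing is the use of convexity of $\Phi$ to promote a single zero $\Phi(u_1) = 0$ into vanishing on an entire initial interval $[0,u_1]$, which is what makes the constant function $\mathbf{1}$ belong to $\cal L^\Phi$ after appropriate scaling.
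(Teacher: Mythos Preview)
Your proposal is correct and follows essentially the same route as the paper: both directions reduce to deciding whether $\mathbf 1\in\cal L^\Phi$ and then invoking Theorem~\ref{t210} (or, for the forward direction without the quasi-non-atomic hypothesis, Proposition~\ref{p1} and Theorem~\ref{t1}). Your explicit use of convexity to pass from a single zero $\Phi(u_1)=0$ to vanishing on $[0,u_1]$ is a useful clarification that the paper leaves implicit.
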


2. A  symmetric space  $(E, \| \cdot \|_E)$ is said to have an {\it order continuous norm} if
$$\| f_{n}\|_E\downarrow 0 \ \ \text{whenever} \ \ f_{n}\in E_+ \ \ \text{and} \ \  f_{n}\downarrow 0.
$$
If $E$  is a symmetric space  with order continuous norm, then
$\mu\{|f| > \lambda\}< \infty$  for all $f \in E$ and $\lambda > 0$, so  $E \su \cal R_\mu $; in particular, $\mathbf 1  \notin   E$.

\vskip 7pt
3.  Let $\varphi$ be an increasing  concave function on $[0, \infty)$ with $\varphi(0) = 0$ and
$\varphi(t) > 0$ for some $t > 0$, and let
$$
\Lambda_\varphi=\Lambda_\varphi(\Om,\mu) = \left \{f \in \cal L^0(\Om,\mu): \ \|f \|_{\Lambda_\varphi} =
\int_0^{\infty} \mu_t(f) d \varphi(t) < \infty \right \},
$$
the corresponding {\it Lorentz space}.

It is well-known that $(\Lambda_\varphi, \|\cdot\|_{\Lambda_\varphi})$
is a fully symmetric space; in addition, if $\varphi(\infty) = \ii$, then
$\mathbf 1  \notin  \Lambda_\varphi$ and if $\varphi(\infty) < \ii$, then
$\mathbf 1  \in  \Lambda_\varphi$.
Another application of Theorem \ref{t210} yields the following.

\begin{teo}\label{t212}
Let  $(\Om,\mu)$ be an infinite measure space,  and let  $\varphi$ be an increasing  concave function on $[0, \infty)$ with $\varphi(0) = 0$ and
$\varphi(t) > 0$ for some $t > 0$. If $\varphi(\infty) = \ii$, then  $\Lambda_\varphi \in IET(\Om)$.
If $(\Om,\mu)$ is
quasi-non-atomic and $\Lambda_\varphi \in IET(\Om)$, then $\varphi(\infty) = \ii$.
\end{teo}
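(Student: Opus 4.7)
The plan is to reduce Theorem \ref{t212} to Theorem \ref{t210} by computing when the characteristic function $\mathbf 1$ belongs to $\Lambda_\varphi$. Since $\Lambda_\varphi$ is already known to be a fully symmetric subspace of $\mc L^1+\mc L^\ii$, the equivalence (1)$\Leftrightarrow$(3) from Theorem \ref{t210} applies directly, and the proof collapses to a one-line norm computation together with the unconditional direction supplied by Theorem \ref{t1}.

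First I would compute $\mu_t(\mathbf 1)$. Because $\mu(\Om)=\ii$, we have $\mu\{\mathbf 1>\lambda\}=\ii$ for every $\lambda<1$ and $\mu\{\mathbf 1>\lambda\}=0$ for every $\lambda\ge 1$, so $\mu_t(\mathbf 1)=1$ for all $t>0$. Consequently
\[
\|\mathbf 1\|_{\Lambda_\varphi}=\int_0^\ii \mu_t(\mathbf 1)\,d\varphi(t)=\int_0^\ii d\varphi(t)=\varphi(\ii)-\varphi(0)=\varphi(\ii),
\]
where $\varphi(\ii):=\lim_{t\to\ii}\varphi(t)$ exists in $[0,\ii]$ by monotonicity. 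Hence $\mathbf 1\in\Lambda_\varphi$ if and only if $\varphi(\ii)<\ii$, which is precisely the dichotomy asserted in the remark preceding the theorem.

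For the first assertion, assume $\varphi(\ii)=\ii$. Then $\mathbf 1\notin\Lambda_\varphi$, and since $\Lambda_\varphi$ is a fully symmetric space with $\mathbf 1\notin\Lambda_\varphi$, Theorem \ref{t1} (which does \emph{not} require quasi-non-atomicity of $(\Om,\mu)$) immediately gives $\Lambda_\varphi\in IET(\Om)$. For the converse, assume $(\Om,\mu)$ is quasi-non-atomic and $\Lambda_\varphi\in IET(\Om)$. The implication (1)$\Rightarrow$(3) of Theorem \ref{t210} forces $\mathbf 1\notin\Lambda_\varphi$, and the computation above yields $\varphi(\ii)=\ii$.

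There is no serious obstacle here: the genuine work has been done in Theorems \ref{t1} and \ref{t10}, which together produce Theorem \ref{t210}, and the only nontrivial verification is the identity $\|\mathbf 1\|_{\Lambda_\varphi}=\varphi(\ii)$. The mild point to be careful about is the interpretation of $\int_0^\ii d\varphi(t)$ when $\varphi(\ii)=\ii$, but this is standard since $\varphi$ is increasing and $\varphi(0)=0$.
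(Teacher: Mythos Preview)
Your proposal is correct and follows essentially the same route as the paper: the paper states (without computation) that $\mathbf 1\in\Lambda_\varphi$ if and only if $\varphi(\infty)<\infty$, and then says ``Another application of Theorem \ref{t210} yields the following.'' Your explicit computation $\|\mathbf 1\|_{\Lambda_\varphi}=\varphi(\infty)$ fills in the detail the paper omits, and your care in invoking Theorem \ref{t1} (rather than Theorem \ref{t210}) for the first assertion is appropriate, since that direction does not require the quasi-non-atomic hypothesis.
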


4. Let $E=E(0,\ii)$ be a fully symmetric space. If $s>0$, let the bounded linear operator
$D_s$ in $E$  be given by $D_s(f)(t) = f(t/s), \ t > 0$.
The  {\it Boyd index}  $q_E$ is defined as
$$
q_E=\lim\limits_{s \to +0}\frac{\log s}{\log \|D_{s}\|}.
$$
It is known that $1\leq q_E\leq \ii$  \cite[Vol.II, Ch.II, \S 2b,  Proposition 2.b.2]{lt}.
Since  $\|D_{s}\| \leq \max\{1,s\}$ \cite[Vol.II, Ch.II, \S 2b]{lt}, $\mathbf 1 \in E$ would imply
$D_{s}(\mathbf 1) =\mathbf 1$  and \ $\|D_{s}\| =1$ for all $s \in (0,1)$, hence $q_E = \infty$.
Thus, if  $q_E < \ii$, we have $\mathbf 1\notin E$.

\end{document}